\def\vbar{\mathchoice{\vrule height6.3ptdepth-.5ptwidth.8pt\kern- .8pt}
{\vrule height6.3ptdepth-.5ptwidth.8pt\kern-.8pt} {\vrule
height4.1ptdepth-.35ptwidth.6pt\kern-.6pt} {\vrule
height3.1ptdepth-.25ptwidth.5pt\kern-.5pt}}
\def\K{\mathbb{K}}
\def\<{\langle}
\def\>{\rangle}
\def\a{\alpha}
\def\b{\beta}
\def\c{\cdot}
\def\f{\phi}
\def\p{\psi}
\def\r{\rho}
\newtheorem{thm}{Theorem}[section]
\newtheorem{lem}[thm]{Lemma}
\newtheorem{cor}[thm]{Corollary}
\newtheorem{pro}[thm]{Proposition}
\newtheorem{ex}[thm]{Example}
\theoremstyle{definition}
\newtheorem{df}{Definition}[section]
\theoremstyle{remark}
\newtheorem{rmk}{Remark}[section]
\begin{document}
\title{Constructions and representation theory of  BiHom-post-Lie algebras}
\author{\bf H. Adimi, T. Chtioui, S. Mabrouk, S. Massoud}
\author{H. Adimi$^{1}$
\footnote{Corresponding author,  E-mail: h.adimi@univ-bba.dz},
T. Chtioui$^{2}$
\footnote{Corresponding author,  E-mail: chtioui.taoufik@yahoo.fr}
, S. Mabrouk$^{3}$
\footnote{Corresponding author,  E-mail: Mabrouksami00@yahoo.fr}
, S. Massoud$^{4}$
\footnote{Corresponding author,  E-mail: sonia.massoud2015@gmail.com}
\\
{\small 1.   Universit\'e Mohamed El Bachir El Ibrahimi de Bordj Bou Arr\'eridj El-Anasser, 34030 - Alg\'erie, } \\
{\small 2.  Faculty of Sciences, University of Sfax, BP 1171, 3000 Sfax, Tunisia}\\
{\small 3.  University of Gafsa, Faculty of Sciences Gafsa, 2112 Gafsa, Tunisia}\\
{\small 4.  Faculty of Sciences, University of Sfax, BP 1171, 3000 Sfax, Tunisia}
}
\date{}
\maketitle
\begin{abstract}
The main goal of this paper is to give some construction results of BiHom-post-Lie algebras which are a generalization of both post-Lie-algebras and Hom-post-Lie
algebras. They are the algebraic structures behind the $\mathcal{O}$-operator of BiHom-Lie algebras. They can be also regarded as the splitting into three parts of the structure
of a BiHom-Lie-algebra. Moreover we develop the representation theory of BiHom-post-Lie algebras on a vector space $V$. We show that there is naturally an induced
representation of its sub-adjacent Lie algebra.
\end{abstract}

\textbf{Key words}: BiHom-post-Lie algebras, BiHom-Lie algebras, representations,   $\mathcal{O}$-operator.

\textbf{Mathematics Subject Classification}: 17D05,17D10,17A15.
\section{Introduction}

The notion of post-algebras goes back to Rosenbloom (1942). An equivalent
formulation of the class of post-algebras was given by Rousseau (1969, 1970)
which became a starting point for deep research. After then authors inspired
many applications in computer science of various generalizations of
post-algebras.\newline
Post-Lie algebras have been introduced by Vallette in 2007 \cite{B. Vallette}
in connection with the homology of partition posets and the study of Koszul
operads. However, J. L. Loday studied pre-Lie algebras and post-Lie algebras
within the context of algebraic operad triples, see for more details \cite%
{Dialg99,Trialg01}. In the last decade, many works \cite{Burde16,Fard,sl2}
intrested in post-Lie algebra structures, motivated by the importance of
pre-Lie algebras in geometry and in connection with generalized Lie algebra
derivations.\newline
Recently,Post-Lie algebras which are non-associative algebras played an
important role in different areas of pure and applied mathematics. They
consist of a vector space $A$ equipped with a Lie bracket $[\cdot ,\cdot ]$
and a binary operation $\triangleright $ satisfying the following axioms%
\begin{align}
x\triangleright \lbrack y,z]& =[x\triangleright y,z]+[y,x\triangleright z],
\\
\lbrack x,y]\triangleright z& =as_{\triangleright
}(x,y,z)-as_{\triangleright }(y,x,z).
\end{align}%
If the bracket $[\cdot ,\cdot ]$ is zero, we have exactly a pre-Lie
structure. The varieties of pre- and post-Lie algebras play a crucial role
in the definition of any pre and post-algebra through black Manin operads
product, see details in \cite{BBGN2012,GubKol2014}. Whereas pre-Lie algebras
are intimately associated with euclidean geometry, post-Lie algebras occur
naturally in the differential geometry of homogeneous spaces, and are also
closely related to Cartan's method of moving frames .Fard et al.\cite{Fard}
studied universal enveloping algebras of post-Lie algebras and the free
post-Lie algebra.

In \cite{bakayoko} Bakayoko studied hom-post-Lie algebras, as a twisted
generalization post Lie algebra. He studied modules over Hom-post-Lie
algebras and gave some constructions he showed also that modules over
Hom-post-Lie algebras are close by twisting either by Hom-post-Lie algebra
endomorphisms or module structure maps. We can obtain Hom post-Lie modules
from only a given multiplicative Hom-post-Lie algebra in a non-trivial sense.%
\newline
Motivated by a categorical study of Hom-algebra and new type of categories,
the authors of \cite{bihomass} introduced a generalized algebraic structure
dealing with two commuting multiplicative linear maps, called BiHom-algebras
including BiHom-associative algebras and BiHom-Lie algebras. When the two
linear maps are the same, then BiHom-algebras become Hom-algebras in some
cases. Many varieties of algebras are generalized to the BiHom-version, see
details in \cite{bihomalt,bihomprealt,Liu&Makhlouf&Menini&Panaite}. \newline

The paper is organized as follows. In section 2, we recall some basic
definitions and properties of post-Lie algebras and BiHom-Lie algebras. In
section 3,  We construct BiHom-Lie algebras structure associated to any
BiHom-post-Lie algebra. The bracket is given by%
\begin{equation*}
\left\{ x,y\right\} =x\triangleright y-\alpha ^{-1}\beta \left( y\right)
\triangleright \alpha \beta ^{-1}\left( x\right) +\left[ x,y\right] .
\end{equation*}%
In addition, we investigate the notion of an $\mathcal{O}$-operator of
weight $\lambda $ to construct a BiHom-post-Lie algebra structure on the $A$%
-module $\mathbb{K}$-algebra of a BiHom-Lie algebra $(A,[\cdot ,\cdot
],\alpha ,\beta )$. The Last section is devoted to introduce the
representation theory of a BiHom-post-Lie algera and its sub-adjacent
BiHom-Lie algebra. \newline
Throughout this paper, all vector spaces are over a field $\mathbb{K}$ of
characteristic zero.
\section{Preliminaries}
In what follows, we recall some concepts and facts used in this paper
\begin{df}\label{post-lie alg}
A (left) post-Lie algebra $(A,[\c,\c],\rhd)$ consists of a Lie algebra $(A,[\c,\c])$
and a binary product $\rhd:A \times A \to A$ such that, for all elements $x, y, z  \in A$ the following relations hold
\begin{align}
    & x \rhd [y,z]=[x\rhd y,z]+[y,x \rhd z], \label{postlie1} \\
    &[x,y]\rhd z= as_{\rhd}(x,y,z)-as_{\rhd}(y,x,z), \label{postlie2}
\end{align}
where $as_{\rhd}(x,y,z)=x\rhd (y \rhd z)-(x\rhd y)\rhd z$.
\end{df}
It is clear that a post-Lie algebra with an abelian Lie algebra structure reduces to a pre-Lie algebra.
If we define $L_{\rhd}: A \to gl(A)$ by $L_{\rhd}(x)y=x\rhd y$, then by Eq. \eqref{postlie1},   $L_{\rhd}(x)$ is a derivation on $(A,[\c,\c])$.

A morphism $f:(A,[\c,\c],\rhd)\to (A',[\c,\c]',\rhd')$ of post-Lie algebras is a linear map satisfying
 $$f([x,y])=[f(x),f(y)], \ \ f(x\rhd y)=f(x)\rhd f(y), \ \ \forall x,y\in A.$$

\begin{pro}
Let  $(A,[\c,\c],\rhd)$ be a post-Lie algebra. Then the bracket
\begin{align}\label{post-lie =>lie}
    \{x,y\}=x\rhd y-y\rhd x+[x,y]
\end{align}
defines a Lie algebra structure on $A$.
\end{pro}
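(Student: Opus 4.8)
The plan is to verify the three defining properties of a Lie bracket for $\{\c,\c\}$: bilinearity, antisymmetry, and the Jacobi identity. Bilinearity is immediate, since $\{\c,\c\}$ is a linear combination of the bilinear maps $\rhd$ and $[\c,\c]$. For antisymmetry I would simply add $\{x,y\}$ and $\{y,x\}$: the two $\rhd$-terms interchange and cancel, while $[x,y]+[y,x]=0$ by antisymmetry of the Lie bracket, so that $\{x,y\}=-\{y,x\}$. The real content lies in the Jacobi identity, where both post-Lie axioms will be needed.

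For the Jacobi identity I would expand the cyclic sum $S:=\{x,\{y,z\}\}+\{y,\{z,x\}\}+\{z,\{x,y\}\}$ directly from the definition \eqref{post-lie =>lie}. Substituting $\{y,z\}=y\rhd z-z\rhd y+[y,z]$ into $\{x,\{y,z\}\}=x\rhd\{y,z\}-\{y,z\}\rhd x+[x,\{y,z\}]$ produces terms of three kinds: (i) pure associator terms built from $\rhd$ alone, such as $x\rhd(y\rhd z)$ and $(y\rhd z)\rhd x$; (ii) mixed terms carrying one $\rhd$ and one Lie bracket, such as $x\rhd[y,z]$ and $[x,y\rhd z]$; and (iii) the pure Lie term $[x,[y,z]]$.

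My strategy would then be to collect the terms of each kind after taking the full cyclic sum and to dispatch each group with its appropriate tool. The three purely Lie terms $[x,[y,z]]+[y,[z,x]]+[z,[x,y]]$ vanish by the Jacobi identity of $(A,[\c,\c])$. For the mixed terms I would rewrite each $x\rhd[y,z]$ using the derivation axiom \eqref{postlie1} as $[x\rhd y,z]+[y,x\rhd z]$; after the cyclic sum these combine with the $[x,y\rhd z]$-type terms and, using antisymmetry of the Lie bracket, cancel in pairs. For the remaining pure $\rhd$-terms the key step is to convert each $[y,z]\rhd x$ arising from $\{y,z\}\rhd x$ into associators via the second axiom \eqref{postlie2}, replacing it by $as_\rhd(y,z,x)-as_\rhd(z,y,x)$. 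Once this substitution is made, every surviving term has the form $x\rhd(y\rhd z)$ or $(y\rhd z)\rhd x$, and the full cyclic sum of these regroups so as to cancel completely.

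I expect the main obstacle to be purely organizational rather than conceptual: the expansion generates on the order of two dozen terms before cancellation, and the difficulty is to group them correctly so that the three tools apply cleanly. A convenient bookkeeping device is to treat $L_\rhd(x)$ as a derivation of $(A,[\c,\c])$ (the content of \eqref{postlie1}) and to note that \eqref{postlie2} expresses the antisymmetrization of the $\rhd$-associator in its first two arguments as $[x,y]\rhd z$; with these two facts isolated, the term-by-term cancellation within each group becomes routine.
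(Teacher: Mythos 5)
Your proof is correct and is essentially the paper's own approach: the paper records this classical proposition without proof in the preliminaries, but its proof of the BiHom analogue (Proposition \ref{subadj lie alg}) is precisely your computation specialized back to $\alpha=\beta=\mathrm{id}$ --- expand the cyclic sum of $\{x,\{y,z\}\}$, kill the pure Lie terms by the Jacobi identity, the mixed terms by the derivation axiom \eqref{postlie1} together with antisymmetry, and the pure $\rhd$-terms by converting $[y,z]\rhd x$ into associators via \eqref{postlie2}. One small refinement to your bookkeeping: after the \eqref{postlie2} substitution the right-nested terms of the form $(a\rhd b)\rhd c$ already cancel within each cyclic summand, so only left-nested terms $a\rhd(b\rhd c)$ survive to the final cyclic cancellation, exactly as your plan predicts.
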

We denote this algebra by $A^C$ and it is called the sub-adjacent Lie-algebra of  $(A,[\c,\c],\rhd)$.
\begin{df}\label{RepPostLie}
 A representation of a   post-Lie algebra $(A,[\c,\c],\rhd)$ on a vector space $V$ is a tuple $(V,\rho,\mu,\nu)$, such that $(V,\rho)$ is a representation of
$(A,[\c,\c])$  and $\mu,\nu: A \to gl(V)$ are linear maps satisfying
{\small
\begin{align}
& \nu([x,y])  =\rho(x) \nu(y) -\rho(y) \nu(x)  , \label{rep pos lie 1}\\
& \rho(x\rhd y) =\mu(x) \rho(y)-\rho(y)  \mu(x)  , \label{rep pos lie 2}\\
&\mu([x,y]) =\mu(x)\mu(y)-\mu(x\rhd y)  -\mu(y)\mu(x)+\mu(y\rhd x)     , \label{rep pos lie 3}\\
& \nu(y)\rho(x) =\mu(x)\nu(y)-\nu(y)\mu(x)-\nu(x\rhd y) +\nu(y)\nu(x)  ,  \label{rep pos lie 4}
\end{align}}
for any $x,y \in A$.
 \end{df}

Let  $(A,[\c,\c],\rhd)$ be a post-Lie algebra and  $(V,\rho,\mu,\nu)$  a representation of   $(A,[\c,\c],\rhd)$.  By identity \eqref{rep pos lie 3}, we deduce that $(V,\mu)$ is a representation of the sub-adjacent Lie algebra $(A,\{\c,\c\})$ of   $(A,[\c,\c],\rhd)$.  Further, it is obvious that $(A,ad,L_{\rhd},R_{\rhd})$ is a representation of  $(A,[\c,\c],\rhd)$  which is called the adjoint representation.

\begin{pro}Let   $(V,\rho,\mu,\nu)$ be a representation of a post-Lie algebra   $(A,[\c,\c],\rhd)$.  Then $(V,\r+\mu-\nu)$ is  a representation of the sub-adjacent Lie algebra $(A,\{\c,\c\})$ of   $(A,[\c,\c],\rhd)$.
\end{pro}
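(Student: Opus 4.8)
The plan is to set $\theta := \r + \mu - \nu : A \to gl(V)$ and to verify directly that $\theta$ respects the sub-adjacent bracket \eqref{post-lie =>lie}, i.e. that $\theta(\{x,y\}) = \theta(x)\theta(y) - \theta(y)\theta(x)$ for all $x,y \in A$; since $(A,\{\c,\c\})$ is a Lie algebra, this is exactly the assertion that $(V,\theta)$ is a representation of it. First I would expand the right-hand side by bilinearity of the commutator into nine pieces, and expand the left-hand side using \eqref{post-lie =>lie} as $\theta(x\rhd y) - \theta(y\rhd x) + \theta([x,y])$, so that it splits cleanly into a $\r$-part, a $\mu$-part and a $\nu$-part. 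The strategy is then to show that the nine commutators regroup into precisely these three parts.

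The three "diagonal" blocks are handled by structure already in place. The term $[\r(x),\r(y)]$ equals $\r([x,y])$ because $(V,\r)$ is a representation of $(A,[\c,\c])$; the term $[\mu(x),\mu(y)]$ equals $\mu(\{x,y\})$, which is precisely the content of \eqref{rep pos lie 3} (this is the observation, recorded just after Definition \ref{RepPostLie}, that $(V,\mu)$ already represents the sub-adjacent algebra). For the mixed $\r$--$\mu$ block I would apply \eqref{rep pos lie 2} in its two forms $[\mu(x),\r(y)] = \r(x\rhd y)$ and $[\r(x),\mu(y)] = -\r(y\rhd x)$; adding these to $\r([x,y])$ reassembles exactly $\r(\{x,y\})$, matching the $\r$-part of the left-hand side.

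The real bookkeeping is in the block containing $\nu$, namely the four cross-terms $[\r(x),\nu(y)]$, $[\nu(x),\r(y)]$, $[\mu(x),\nu(y)]$, $[\nu(x),\mu(y)]$ together with $[\nu(x),\nu(y)]$. Here I would feed in \eqref{rep pos lie 1} for the two $\r$--$\nu$ commutators, and \eqref{rep pos lie 4}, used once as stated and once with $x$ and $y$ interchanged, for the two $\mu$--$\nu$ commutators. The point to check carefully is that all the spurious products — the $\nu(y)\r(x)$, $\nu(x)\r(y)$, $\nu(x)\nu(y)$ and $\nu(y)\nu(x)$ terms produced by these substitutions — cancel in pairs, leaving only $-\nu([x,y]) - \nu(x\rhd y) + \nu(y\rhd x) = -\nu(\{x,y\})$. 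This cancellation is the main obstacle: it is the unique step where all four defining identities are used at once, and any sign or index slip manifests as an uncancelled operator product. Once the three blocks are summed, the right-hand side becomes $\r(\{x,y\}) + \mu(\{x,y\}) - \nu(\{x,y\}) = \theta(\{x,y\})$, which is the required equality, so that $(V,\r+\mu-\nu)$ is a representation of $(A,\{\c,\c\})$.
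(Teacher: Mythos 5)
Your proposal is correct, and I checked the one step you flagged as delicate: expanding $[\theta(x),\theta(y)]$ with $\theta=\rho+\mu-\nu$ into nine commutator blocks, the $\rho$-block closes via the Lie-representation property together with \eqref{rep pos lie 2} used twice (as $[\mu(x),\rho(y)]=\rho(x\rhd y)$ and $[\rho(x),\mu(y)]=-\rho(y\rhd x)$), the $\mu$-block is exactly \eqref{rep pos lie 3} rearranged, and in the $\nu$-block the substitutions from \eqref{rep pos lie 1} and from \eqref{rep pos lie 4} (once as stated, once with $x$ and $y$ interchanged) do produce spurious products $\mu(x)\nu(y)-\nu(y)\mu(x)$, $\mu(y)\nu(x)-\nu(x)\mu(y)$ and $\nu(y)\nu(x)-\nu(x)\nu(y)$, which are annihilated precisely by the signed commutators $-[\mu(x),\nu(y)]$, $-[\nu(x),\mu(y)]$ and $+[\nu(x),\nu(y)]$ from the expansion, leaving $-\nu([x,y])-\nu(x\rhd y)+\nu(y\rhd x)=-\nu(\{x,y\})$ as you claim. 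Your route is, however, genuinely different from the paper's treatment: the paper states this classical proposition without proof, and for its BiHom analogue (the final proposition of Section 4, with $\pi(x)=\rho(x)+\mu(x)-\nu(\alpha\beta^{-1}(x))\phi^{-1}\psi$) it argues structurally rather than by direct expansion. There, Theorem \ref{semi-directproduct post-lie alg} packages $(V,\rho,\mu,\nu,\phi,\psi)$ into a semi-direct product post-Lie algebra on $A\oplus V$; computing the sub-adjacent bracket $[\cdot,\cdot]^C$ on $A\oplus V$ and recognizing its $V$-component as the semidirect-product bracket attached to $\pi$, Lemma \ref{lem:semidirectp} then delivers the conclusion in three lines. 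That approach buys conceptual clarity — it explains \emph{why} $\rho+\mu-\nu$ is the right combination, since it is literally read off from the sub-adjacent algebra of $A\ltimes V$ — but it hides the cancellations inside the proof of the semidirect-product theorem. Your direct verification is more elementary and self-contained, makes explicit which of the four axioms enters at which commutator, and would transfer to the BiHom setting with the twisting maps inserted; the total labor is comparable once the cost of Theorem \ref{semi-directproduct post-lie alg} is counted.
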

\begin{df}
A BiHom-Lie algebra is a tuple  $(A,[\cdot,\cdot],\a,\b)$ consisting of a linear space $A$, a linear map $[\cdot,\cdot]:\otimes^2 A\longrightarrow A$ and two linear map $\a,\b:A\longrightarrow A $, satisfying
\begin{eqnarray}
 &\a \circ \b =\b \circ \a, \nonumber\\
&\a( [x,y]) =[\a(x),\a(y)],\ \b( [x,y]) = [\b(x),\b(y)],  \nonumber\\
& [\b(x),\a(y)]= -[\b(y),\a(x)],\nonumber\\
& \circlearrowleft_{x,y,z \in A} [\b^2(x),[\b(y),\a(z)]]= 0\label{Bihom-jaco},
\end{eqnarray}
for any $x,y,z \in A$. A BiHom-Lie algebra  $(A,[\cdot,\cdot],\a,\b)$ is called regular if $\a$ and $\b$ are bijective.
\end{df}
\begin{pro}\label{TwistLieAlg}
   Let $(A,[\cdot,\cdot])$ be a Lie algebra and $\a,\b$ two commuting morphisms on $A$. Then $(A,[\cdot,\cdot]_{\a,\b}=[\cdot,\cdot]\circ (\a\otimes\b),\alpha,\beta)$ is a BiHom-Lie algebra.
\end{pro}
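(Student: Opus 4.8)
The plan is to verify directly the four defining identities of a BiHom-Lie algebra for the twisted bracket $[\c,\c]_{\a,\b}=[\c,\c]\o(\a\otimes\b)$, that is $[x,y]_{\a,\b}=[\a(x),\b(y)]$, exploiting throughout that $\a$ and $\b$ are commuting Lie-algebra morphisms, so that both maps pass freely through the original bracket and may be interchanged at will.

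First I would dispose of the two structural conditions. The commutativity $\a\o\b=\b\o\a$ is part of the hypothesis. For multiplicativity I would compute $\a([x,y]_{\a,\b})=\a([\a(x),\b(y)])=[\a^2(x),\a\b(y)]$ and compare with $[\a(x),\a(y)]_{\a,\b}=[\a^2(x),\b\a(y)]$; equality follows from $\a\b=\b\a$. The identity $\b([x,y]_{\a,\b})=[\b(x),\b(y)]_{\a,\b}$ is handled the same way.

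Next comes BiHom-skew-symmetry. Writing $\g=\a\b=\b\a$, the left-hand side is $[\b(x),\a(y)]_{\a,\b}=[\a\b(x),\b\a(y)]=[\g(x),\g(y)]$, while the right-hand side $-[\b(y),\a(x)]_{\a,\b}=-[\g(y),\g(x)]$ equals $[\g(x),\g(y)]$ by the ordinary skew-symmetry of $[\c,\c]$; hence the two coincide.

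The main work, and the only place a genuine computation is required, is the BiHom-Jacobi identity. I would first simplify the inner bracket $[\b(y),\a(z)]_{\a,\b}=[\a\b(y),\a\b(z)]$, and then the outer one, using multiplicativity of $\b$, to obtain $[\b^2(x),[\b(y),\a(z)]_{\a,\b}]_{\a,\b}=[\a\b^2(x),[\a\b^2(y),\a\b^2(z)]]$. Setting $w=\a\b^2$, each cyclic summand becomes $[w(x),[w(y),w(z)]]$, so that $\circlearrowleft_{x,y,z}$ yields precisely the classical Jacobi identity of $(A,[\c,\c])$ evaluated at $w(x),w(y),w(z)$, which vanishes. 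The only pitfall here is bookkeeping: one must track the order and number of applications of $\a$ and $\b$ correctly and invoke $\a\b=\b\a$ at the right moments; once the single twist map $w$ is factored out, no identity beyond the original Jacobi relation is needed, so the proposition follows.
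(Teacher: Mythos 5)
Your proof is correct: the multiplicativity and skew-symmetry checks are routine, and the key reduction of the BiHom-Jacobi identity to the classical Jacobi identity via the single map $w=\a\b^2$ (after using $\a\b=\b\a$ to rewrite $[\b^2(x),[\b(y),\a(z)]_{\a,\b}]_{\a,\b}=[w(x),[w(y),w(z)]]$) is exactly right. The paper states this proposition without proof (it is the standard twisting construction from the BiHom literature), and your direct verification is precisely the expected argument, so there is nothing to compare beyond noting that you have supplied the omitted details correctly.
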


 \begin{df}\label{defi:bihom-lie representation}
 A representation of a BiHom-Lie algebra $(A,[\cdot,\cdot],\alpha,\beta)$ on
 a vector space $V$ with respect to commuting linear maps $\phi,\psi:V\rightarrow V$ is a linear map
  $\rho:A\longrightarrow gl(V)$, such that for all
  $x,y\in A$, the following equalities are satisfied
\begin{eqnarray}
\label{bihom-lie-rep-1}\rho(\alpha(x))\circ \f&=&\f\circ \rho(x),\\
\label{bihom-lie-rep-2} \rho(\b(x))\circ \p&=&\p \circ \rho(x),\\
\label{bihom-lie-rep-3}\rho([\b(x),y])\circ  \p &=&\rho(\alpha \b(x))\circ\rho(y)-\rho(\b(y))\circ\rho(\a(x)).
\end{eqnarray}
  \end{df}
We denote such a representation by $(V,\rho,\f,\p)$. For all $x\in A$, we define $ad_{x}:A \to  A$ by
\begin{eqnarray*}
ad_{x}(y)=[x,y],\quad\forall y \in A.
\end{eqnarray*}
Then $ad:A \longrightarrow gl(A)$ is a representation of the BiHom-Lie algebra $(A,[\cdot,\cdot],\a,\b)$ on $A$ with respect to $\alpha$ and $\b$,  which is called the adjoint representation.

  Let $(A,[\cdot,\cdot])$ be a Lie algebra, $\a,\b$ two commuting morphisms on $A$. Consider a representation $(V,\rho)$ of $A$ and two commuting linear maps $\phi,\psi:V\rightarrow V$  satisfying
  $$\rho(\alpha(x))\circ \f=\f\circ \rho(x),\ \
 \rho(\b(x))\circ \p=\p \circ \rho(x).$$
  Define a linear map   $\widetilde{\rho}:A\longrightarrow gl(V)$ by $\widetilde{\rho}(x)(v)=\rho(\alpha(x))( \p(v))$.\begin{lem}\label{twistmodule} With the above notation $(V,\widetilde{\rho},\f,\p)$ is a representation of $(A,[\cdot,\cdot]_{\a,\b},\alpha,\beta)$.
\end{lem}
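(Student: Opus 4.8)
The plan is to verify directly that the triple $(V,\widetilde{\rho},\f,\p)$ satisfies the three axioms of Definition \ref{defi:bihom-lie representation} relative to the BiHom-Lie algebra $(A,[\cdot,\cdot]_{\a,\b},\a,\b)$, the latter being a BiHom-Lie algebra by Proposition \ref{TwistLieAlg}. Writing $\widetilde{\rho}(x)=\rho(\a(x))\circ\p$, I first record two reformulations of the hypotheses that will be used repeatedly: the assumption $\rho(\a(x))\circ\f=\f\circ\rho(x)$ reads $\f\circ\rho(x)=\rho(\a(x))\circ\f$, so $\f$ can be pushed to the right past a $\rho$ at the cost of applying $\a$ inside; likewise $\rho(\b(x))\circ\p=\p\circ\rho(x)$ gives $\p\circ\rho(x)=\rho(\b(x))\circ\p$. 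Throughout I will freely use $\a\b=\b\a$ and $\f\p=\p\f$.

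For axiom \eqref{bihom-lie-rep-1} I compute $\widetilde{\rho}(\a(x))\circ\f=\rho(\a^2(x))\circ\p\circ\f$ and $\f\circ\widetilde{\rho}(x)=\f\circ\rho(\a(x))\circ\p=\rho(\a^2(x))\circ\f\circ\p$; these agree because $\f$ and $\p$ commute. Axiom \eqref{bihom-lie-rep-2} is entirely analogous: $\widetilde{\rho}(\b(x))\circ\p=\rho(\a\b(x))\circ\p^2$, while $\p\circ\widetilde{\rho}(x)=\p\circ\rho(\a(x))\circ\p=\rho(\a\b(x))\circ\p^2$ after moving the outer $\p$ through $\rho(\a(x))$ and using $\b\a=\a\b$. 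Thus both commutation axioms hold, their proofs relying only on the commuting relations among $\a,\b,\f,\p$ and the two hypotheses.

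The substantive step is axiom \eqref{bihom-lie-rep-3}, namely $\widetilde{\rho}([\b(x),y]_{\a,\b})\circ\p=\widetilde{\rho}(\a\b(x))\circ\widetilde{\rho}(y)-\widetilde{\rho}(\b(y))\circ\widetilde{\rho}(\a(x))$. First I expand the twisted bracket as $[\b(x),y]_{\a,\b}=[\a\b(x),\b(y)]$ and use that $\a$ is a morphism of $(A,[\cdot,\cdot])$ to rewrite the left-hand side as $\rho([\a^2\b(x),\a\b(y)])\circ\p^2$. On the right-hand side I expand each $\widetilde{\rho}$ into $\rho(\a(-))\circ\p$ and push the interior $\p$ to the right past the $\rho$-factors via $\p\circ\rho(z)=\rho(\b(z))\circ\p$, which brings both products into the form $\big(\cdots\big)\circ\p^2$ with $\rho$-factors evaluated at $\a^2\b(x)$ and $\a\b(y)$. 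The two terms then read $\rho(\a^2\b(x))\rho(\a\b(y))\circ\p^2$ and $\rho(\a\b(y))\rho(\a^2\b(x))\circ\p^2$, and their difference equals $\rho([\a^2\b(x),\a\b(y)])\circ\p^2$ precisely because $(V,\rho)$ is a representation of the Lie algebra $(A,[\cdot,\cdot])$, i.e. $\rho([a,b])=\rho(a)\rho(b)-\rho(b)\rho(a)$. This matches the left-hand side and completes the verification.

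The only genuine bookkeeping—and the step I expect to demand the most care—is keeping the various powers of $\a$ and $\b$ correctly aligned while commuting $\p$ through the $\rho$-factors in \eqref{bihom-lie-rep-3}; once the interior $\p$'s are collected into a single $\p^2$ on the right, the identity reduces to the Lie representation property of $\rho$ and nothing further is needed.
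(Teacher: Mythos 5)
Your proof is correct: all three axioms of Definition \ref{defi:bihom-lie representation} are verified with the right powers of $\a,\b$ and the reduction of axiom \eqref{bihom-lie-rep-3} to $\rho([a,b])=\rho(a)\rho(b)-\rho(b)\rho(a)$ after collecting $\p^2$ is exactly right. The paper states Lemma \ref{twistmodule} without proof, and your direct verification is the standard intended argument, so there is nothing to compare beyond noting that you have supplied the omitted details.
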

\begin{lem}\label{lem:semidirectp}
Let $(A,[\cdot,\cdot],\alpha,\b)$ be a regular BiHom-Lie algebra, $(V,\f,\p)$  a vector space with two commuting bijective  linear transformations and $\rho:
A \rightarrow gl(V)$ a linear
map. Then $(V,\r,\f,\p)$ is a representation of $(A,[\cdot,\cdot],\a,\b)$ if and only if $(A\oplus V,[\cdot,\cdot]_\rho,\alpha+\f,\b+\p)$ is a BiHom-Lie algebra, where $[\cdot,\cdot]_\rho$, $\alpha+\f$ and $\b+\p$ are defined by
\begin{eqnarray}\label{eq:sum}
[x+u,y+v]_{\rho}&=&[x,y]+\rho(x)v-\rho(\a^{-1}\b(y))\f\p^{-1}u,\\
(\alpha+\f)(x+u)&=&\alpha(x)+\f(u),\\
(\b+\p)(x+u)&=&\b(x)+\p(u),
\end{eqnarray}
for all $x,y\in A,~u,v\in V$.
\end{lem}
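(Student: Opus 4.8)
The plan is to verify the four defining axioms of a BiHom-Lie algebra for the tuple $(A\oplus V,[\cdot,\cdot]_\rho,\a+\f,\b+\p)$ and to check that, term by term, they are equivalent to the three representation conditions \eqref{bihom-lie-rep-1}--\eqref{bihom-lie-rep-3}. The guiding observation is that every axiom splits into an $A$-component and a $V$-component: the $A$-component always reduces to the corresponding axiom for $(A,[\cdot,\cdot],\a,\b)$, which holds by hypothesis, so the entire content sits in the $V$-components. Thus I will only track the $V$-parts throughout.

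First I would dispose of the easy axioms. Commutativity $(\a+\f)(\b+\p)=(\b+\p)(\a+\f)$ is immediate from $\a\b=\b\a$ and $\f\p=\p\f$. For multiplicativity of $\a+\f$, expanding $(\a+\f)[x+u,y+v]_\rho$ and $[(\a+\f)(x+u),(\a+\f)(y+v)]_\rho$ and comparing the free coefficients of $v$ and of $u$ yields, from the $v$-term, exactly $\f\rho(x)=\rho(\a(x))\f$, that is \eqref{bihom-lie-rep-1}; the $u$-term is then automatically satisfied after rewriting $\a^{-1}\b\a=\b$ and using $\f\p=\p\f$ together with \eqref{bihom-lie-rep-1}. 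Symmetrically, multiplicativity of $\b+\p$ is equivalent to \eqref{bihom-lie-rep-2}. For the BiHom-skewsymmetry I would compute $[(\b+\p)(x+u),(\a+\f)(y+v)]_\rho$ and check that it equals $-[(\b+\p)(y+v),(\a+\f)(x+u)]_\rho$; the asymmetric shape of $[\cdot,\cdot]_\rho$ (the factors $\a^{-1}\b$ and $\f\p^{-1}$ on the second summand) is tailored precisely so that this holds with no extra hypothesis, given only the skewsymmetry of $[\cdot,\cdot]$ on $A$.

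The heart of the argument is the BiHom-Jacobi identity \eqref{Bihom-jaco} for the triple $X=x+u$, $Y=y+v$, $Z=z+w$. I would first compute the inner bracket $[(\b+\p)(Y),(\a+\f)(Z)]_\rho$, whose $V$-part is $\rho(\b(y))\f(w)-\rho(\b(z))\f(v)$, and then the outer bracket with $(\b+\p)^2(X)=\b^2(x)+\p^2(u)$, reducing $\a^{-1}\b[\b(y),\a(z)]$ to $[\a^{-1}\b^2(y),\b(z)]$ by multiplicativity and invertibility of $\a,\b$. Taking the cyclic sum over $(X,Y,Z)$ and collecting the terms acting on $w$ gives
\[
\rho(\b^2(x))\rho(\b(y))\f-\rho(\b^2(y))\rho(\b(x))\f-\rho([\a^{-1}\b^2(x),\b(y)])\f\p ,
\]
and likewise for $u$ and $v$ by cyclic symmetry. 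I would then invoke \eqref{bihom-lie-rep-3} with the arguments $\a^{-1}\b(x)$ and $\b(y)$, which reads $\rho([\a^{-1}\b^2(x),\b(y)])\p=\rho(\b^2(x))\rho(\b(y))-\rho(\b^2(y))\rho(\b(x))$, and use $\f\p=\p\f$ to see that the displayed expression vanishes. Since $\a,\b$ are bijective the substituted arguments range over all of $A$, so the simultaneous vanishing of the three collected groups is equivalent to \eqref{bihom-lie-rep-3}. Assembling the equivalences established for each axiom yields the stated biconditional.

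The main obstacle I anticipate is this BiHom-Jacobi step: the careful bookkeeping of how $\a,\b,\f,\p$ are distributed through the nested brackets, and in particular the reindexing $x\mapsto\a^{-1}\b(x)$, $y\mapsto\b(y)$ needed to recognize the collected $V$-terms as an instance of \eqref{bihom-lie-rep-3}. Everything else is routine once the $A$/$V$ splitting is made explicit.
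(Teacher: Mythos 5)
Your proposal is correct and takes essentially the same route as the paper: the paper states Lemma \ref{lem:semidirectp} without a separate proof, but its proof of the more general semidirect-sum proposition in Section 3 is exactly your computation — expand the cyclic BiHom-Jacobi sum on $A\oplus V$ and observe that the $V$-components vanish precisely when the representation conditions \eqref{bihom-lie-rep-1}--\eqref{bihom-lie-rep-3} hold. Your write-up is in fact more complete than the paper's, since you also check multiplicativity (equivalent to \eqref{bihom-lie-rep-1} and \eqref{bihom-lie-rep-2}) and the automatic BiHom-skewsymmetry explicitly, and you record the reindexing $x\mapsto\a^{-1}\b(x)$, $y\mapsto\b(y)$ needed to recognize the collected terms as an instance of \eqref{bihom-lie-rep-3}, which the paper leaves implicit.
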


Now we recall the definition  of an $\mathcal{O}$-operator on a BiHom-Lie algebra associated to a given representation, which generalize the Rota-Baxter operator of weight $0$ introduced in \cite{Liu&Makhlouf&Menini&Panaite} and defined as a linear operator $R$ on a BiHom-Lie algebra  $(A,[\cdot,\cdot],\alpha,\beta)$ such that $R\circ \a=\a \circ R$, $R \circ \b=\b \circ R$ and
$$[R(x),R(y)]=R([R(x),y]+[x,R(y)]),\ \forall x,y \in A.$$
\begin{df}
Let $(A,[\cdot,\cdot],\alpha,\beta)$ be a BiHom-Lie algebra and $(V,\rho,\f,\p)$ be a representation.  A linear map $T: V \to A$ is called an $\mathcal{O}$-\textbf{operator}
associated to $\rho$, if it satisfies
\begin{align}\label{O-operator}
    [T(u),T(v)]=T(\rho(T(u))v-\rho(T(\f^{-1}\p(v)))\f\p^{-1}(u)),\ \ \forall u,v \in V.
\end{align}
\end{df}

\section{BiHom-post-Lie algebras}

In this section, we recall  the notion of BiHom-post-Lie algebras  ( see \cite{Liu&Makhlouf&Menini&Panaite}). We will provide some construction results.

\begin{df}
Let $(A,[\c,\c],\a,\b)$ be BiHom-Lie algebra. Let $(V,\{\c,\c\},\f,\p)$ be a BiHom-Lie algebra and $\rho:A  \to gl(V)$ be a linear map. We say that $(V,\{\c,\c\},\rho,\f,\p,)$ is an $A$-module $\mathbb{K}$-algebra if $(V,\rho,\f,\p,)$ is a representation of  $(A,[\c,\c],\a,\b)$  such that the following compatibility condition holds
\begin{align}\label{repKalgebras}
&\r(\a\b(x))\{u,v\}=\{\r(\b(x))u,\p(v)\}+\{\p(u),\r(\a(x))v\},
\end{align}
for all $x \in A,\ u,v \in V$.
\end{df}
It is known  that $(A,ad,\a,\b)$ is a representation of $A$ called the adjoint representation. Then $(A,[\cdot,\cdot],ad,\a,\b)$ is an $A$-module $\mathbb{K}$-algebra.
\begin{pro}
Let $(A,[\c,\c],\a,\b)$ and $(V,\{\c,\c\},\f,\p)$ be two regular BiHom-Lie algebras and $\r: A \to gl(V)$ be a linear map. Then $(V,\{\c,\c\},\r,\f,\p)$  be an $A$-module $\mathbb{K}$-algebra if and only if the direct sum $A\oplus V$ of vector spaces  is turned into a  BiHom-Lie algebra (the semi-direct sum) by defining a bracket on  $A\oplus V$ by
\begin{align}
[x+u,y+v]_{\rho}=&[x,y]+\rho(x)v-\rho(\a^{-1}\b(y))\f\p^{-1}u+\{u,v\},\\
(\alpha+\f)(x+u)=&\alpha(x)+\f(u),\\
(\b+\p)(x+u)=&\b(x)+\p(u),
\end{align}
for all $x,y\in A,~u,v\in V$.
\end{pro}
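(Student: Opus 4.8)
The plan is to check the four BiHom-Lie axioms for the tuple $(A\oplus V,[\cdot,\cdot]_\rho,\alpha+\phi,\beta+\psi)$, noting that this result extends Lemma \ref{lem:semidirectp}, to which it reduces when $\{\cdot,\cdot\}\equiv 0$. Since both $(A,[\cdot,\cdot],\alpha,\beta)$ and $(V,\{\cdot,\cdot\},\phi,\psi)$ are \emph{regular} BiHom-Lie algebras, several ingredients come for free: the identity $(\alpha+\phi)(\beta+\psi)=(\beta+\psi)(\alpha+\phi)$ follows from $\alpha\beta=\beta\alpha$ and $\phi\psi=\psi\phi$, the bijectivity of $\alpha+\phi$ and $\beta+\psi$ follows from that of each summand, and regularity is exactly what makes the twists $\alpha^{-1}\beta$ and $\phi\psi^{-1}$ in the mixed bracket meaningful.

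First I would dispose of multiplicativity and BiHom skew-symmetry of $[\cdot,\cdot]_\rho$. Writing a general element of $A\oplus V$ as $X=x+u$ and using multilinearity, the $A$-component of each axiom is precisely the corresponding axiom of $A$, while the $V$-component splits into a purely $\{\cdot,\cdot\}$-part, handled by the axioms of $V$, and $\rho$-parts. For the multiplicativity of $\alpha+\phi$ and $\beta+\psi$ the $\rho$-parts are controlled by \eqref{bihom-lie-rep-1} and \eqref{bihom-lie-rep-2}, and for BiHom skew-symmetry they collapse using the very shape of the mixed term $-\rho(\alpha^{-1}\beta(y))\phi\psi^{-1}u$ together with skew-symmetry of $\{\cdot,\cdot\}$.

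The heart of the proof is the BiHom-Jacobi identity \eqref{Bihom-jaco}. Expanding $\circlearrowleft_{X,Y,Z}[(\beta+\psi)^2(X),[(\beta+\psi)(Y),(\alpha+\phi)(Z)]_\rho]_\rho$ on $X=x+u$, $Y=y+v$, $Z=z+w$ and sorting the terms by how many arguments are taken from $A$ and how many from $V$ gives four independent blocks: three arguments in $A$ yields the Jacobi identity of $A$; three in $V$ yields the Jacobi identity of $V$; two in $A$ and one in $V$ never involves $\{\cdot,\cdot\}$, so it reproduces exactly, as in Lemma \ref{lem:semidirectp}, the representation axiom \eqref{bihom-lie-rep-3}; and one in $A$ with two in $V$ produces the new block. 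In the forward direction all four blocks vanish because $A$ and $V$ are BiHom-Lie algebras, $(V,\rho,\phi,\psi)$ is a representation, and the compatibility \eqref{repKalgebras} holds; in the converse direction each block is isolated by feeding in arguments of the appropriate type (the homogeneous components must vanish separately), so the BiHom-Lie structure of $A\oplus V$ forces $(V,\rho,\phi,\psi)$ to be a representation and forces \eqref{repKalgebras}, i.e. that $(V,\{\cdot,\cdot\},\rho,\phi,\psi)$ is an $A$-module $\mathbb{K}$-algebra.

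I expect the one-in-$A$, two-in-$V$ block to be the main obstacle. After computing the three cyclic terms, the $\rho$-factors appear as $\rho(\beta^2(x))$ and $\rho(\beta(x))$ composed with $\phi$ or $\psi$, and one must push these structure maps through using \eqref{bihom-lie-rep-1}, \eqref{bihom-lie-rep-2} (for instance $\rho(\beta(x))\phi=\phi\,\rho(\alpha^{-1}\beta(x))$) and then apply the BiHom skew-symmetry $\{\psi(a),\phi(b)\}=-\{\psi(b),\phi(a)\}$ of $V$ to merge the three cyclic terms into the two-term right-hand side of \eqref{repKalgebras}. The careful bookkeeping of the twists $\alpha^{-1}\beta$ and $\phi\psi^{-1}$ carried by the mixed bracket is what makes this step delicate, but no genuinely new idea beyond Lemma \ref{lem:semidirectp} is required.
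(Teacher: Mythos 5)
Your proposal is correct and follows essentially the same route as the paper: the paper's proof likewise expands the cyclic BiHom-Jacobi sum for $[\cdot,\cdot]_\rho$ and observes that its vanishing splits into the Jacobi identities of $A$ and $V$, the representation axiom \eqref{bihom-lie-rep-3}, and the compatibility \eqref{repKalgebras}, in both directions. Your treatment is in fact somewhat more complete, since you also spell out the skew-symmetry and multiplicativity checks and the isolation of homogeneous blocks in the converse, which the paper leaves implicit.
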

We denote this algebra by $A\ltimes_{\r,\f,\p}^{\a,\b} V$ or simply $A\ltimes V$.
\begin{proof}Let $x,y,z\in A$ and $a,b,c\in V$
  \begin{align*}
&\circlearrowleft_{x+a,y+b,z+c}[(\beta+\psi)^2(x+a),[(\beta+\psi)(y+b),(\alpha+\phi)(z+c)]_{\rho}]_{\rho}\\
=&\circlearrowleft_{x+a,y+b,z+c}([\beta^2(x),[\beta(y),\alpha(z)]]+\rho(\beta^2(x))\rho(\beta(y))\phi(c)-\rho(\beta^2(x))\rho(\beta(z))\phi(b)\\
&+\rho(\beta^2(x))[\psi(b),\phi(c)]_{V}-\rho([\alpha^{-1}\beta^2(y),\beta(z)])\phi\psi(a)+[\psi^2(a),\rho(\beta(y))\phi(c)]_{V}\\
&-[\psi^2(a),\rho(\beta(z))\phi(b)]_{V}+[\psi^2(a),[\psi(b),\phi(c)]_{V}]_{V})=0,
\end{align*}
 if and only if $(V,\rho,\phi,\psi)$ is a representation on $A$ and satisfies Eq.  \eqref{repKalgebras}.
\end{proof}
\begin{df}
 Let $(A,[\c,\c],\a,\b)$ be a regular BiHom-Lie algebra and  $(V,\{\c,\c\},\r,\f,\p)$  be a regular $A$-module $\mathbb{K}$-algebra. A linear map $T:V\to A$ is called an $\mathcal{O}$-operator of weight $\lambda\in \mathbb{K}$ associated to $(V,\{\c,\c\},\r,\f,\p)$ if its satisfies
 \begin{align}
    & T\f=\a T,\ \  T\p=\b T, \\
    & \label{O-operatorLamda}
    [T(u),T(v)]=T(\rho(T(u))v-\rho(T(\f^{-1}\p(v)))\f\p^{-1}(u)+\lambda\{u,v\}),
 \end{align}
 for all $u,v\in V$.
\end{df}
\begin{ex}
  An $\mathcal{O}$-operator of weight $\lambda$ associated to $(A,[\c,\c],ad,\a,\b)$ is just  a Rota-Baxter on $A$ of the same weight.
\end{ex}
\begin{df}\cite{Liu&Makhlouf&Menini&Panaite}
A (left) BiHom-post-Lie algebra  is a tuple $(A,[\c,\c],\rhd,\a,\b)$ where $(A,[\c,\c],\a,\b)$ is a BiHom-Lie algebra such that for any $x,y,z \in A$
\begin{align}
& \a(x \rhd y)=\a(x) \rhd \a(y),\ \b(x \rhd y)= \b(x) \rhd \b(y), \\
\label{bihom-post-lie condition1}
    &\a\b(x)\rhd[y,z]=[\b(x)\rhd y,\b(z)]+[\b(y),\a(x)\rhd z] ,\\
    \label{bihom-post-lie condition2}& [\b(x),\a(y)]\rhd \b(z)=as_{\a,\b}(\b(x),\a(y),z)- as_{\a,\b}(\b(y),\a(x),z) ,
\end{align}
where $as_{\a,\b}(x,y,z)=\a(x)\rhd (y\rhd z)-(x\rhd y) \rhd \b(z)$.  We call $[\c,\c]$ the \emph{torsion}  and $\rhd$ the  \emph{connection}  of the (left) BiHom-post-Lie algebra.
\end{df}
if $\a=\b=id$, then we recover a (left) post-Lie algebra. In addition, if $[\c,\c]=0$, then $A$ reduces to a  BiHom-pre-Lie algebra.

From now on, we will write BiHom-post-Lie algebra instead of (left) BiHom-post-Lie algebra.
A BiHom-post-Lie algebra  $(A,[\c,\c],\rhd,\a,\b)$ is said to be regular if $\a$ and $\b$ are bijective.


\begin{ex}\label{TwistBiHomPostLie}
Let $(A,[\c,\c],\rhd)$   be a post-Lie algebra and $\a, \b$ be two commuting morphisms on $A$. Then $(A,[\c,\c]_{\a,\b},\rhd_{\a,\b})$ is a BiHom-post-Lie algebra, Where, for $x,y\in A$,

\begin{equation}\label{TwistOperations}
  [x,y]_{\a,\b}=[\a(x),\b(y)],\ \ x\rhd_{\a,\b}y=\a(x)\rhd\b(y).
\end{equation}
\end{ex}

\begin{ex}
  Let $(A,[\c,\c],\a,\b)$ be a  BiHom-Lie algebra. Then  $(A,[\c,\c],\rhd,\a,\b)$ is a BiHom-post-Lie algebra, where
  $$x\rhd y=[y,x],\ \ \forall x,y\in A.$$
\end{ex}
\begin{pro}\label{subadj lie alg}
Let $(A,[\c,\c],\rhd,\a,\b)$ be a regular BiHom-post-Lie algebra. Then the bracket
\begin{equation}\label{liebracket}
      \{x,y\}=x\rhd y-\a^{-1}\b(y)\rhd \a\b^{-1}(x)+[x,y]
\end{equation}
defines a BiHom-Lie algebra structure on $A$. We denote this algebra by $A^C$ and we call it the sub-adjacent  BiHom-Lie algebra of $A$.
\end{pro}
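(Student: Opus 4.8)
The plan is to check the four defining conditions of a BiHom-Lie algebra for $(A,\{\cdot,\cdot\},\alpha,\beta)$ with $\{\cdot,\cdot\}$ given by \eqref{liebracket}: commutativity of $\alpha,\beta$, multiplicativity of $\{\cdot,\cdot\}$ under $\alpha$ and $\beta$, BiHom-skewsymmetry, and the BiHom-Jacobi identity \eqref{Bihom-jaco}. Commutativity is inherited from the ambient BiHom-Lie algebra $(A,[\cdot,\cdot],\alpha,\beta)$. For multiplicativity I would apply $\alpha$ to \eqref{liebracket}; since $\rhd$ and $[\cdot,\cdot]$ are each multiplicative and $\alpha\beta=\beta\alpha$, the three summands transform correctly, the only point being that inside the middle term $\alpha^{-1}\beta(\alpha y)=\beta(y)$ and $\alpha\beta^{-1}(\alpha x)=\alpha^2\beta^{-1}(x)$, which is exactly what $\{\alpha x,\alpha y\}$ produces. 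The computation for $\beta$ is identical.

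For BiHom-skewsymmetry, substituting $x\mapsto\beta x$, $y\mapsto\alpha y$ into \eqref{liebracket} and simplifying the twists via $\alpha^{-1}\beta\alpha=\beta$ and $\alpha\beta^{-1}\beta=\alpha$ yields the convenient normal form
\[
\{\beta x,\alpha y\}=\beta x\rhd\alpha y-\beta y\rhd\alpha x+[\beta x,\alpha y].
\]
Adding the same identity with $x,y$ interchanged, the connection terms cancel in pairs and only $[\beta x,\alpha y]+[\beta y,\alpha x]$ survives, which vanishes by the BiHom-skewsymmetry of the torsion.

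The crux is the BiHom-Jacobi identity $\circlearrowleft_{x,y,z}\{\beta^2 x,\{\beta y,\alpha z\}\}=0$. Using the normal form above for the inner bracket and then expanding $\{\beta^2 x,\cdot\}$ by \eqref{liebracket}, each cyclic summand breaks into nine monomials; here one must track that the outer middle term applies $\alpha^{-1}\beta$ to the whole inner bracket (distributing over $\rhd$ and $[\cdot,\cdot]$ by multiplicativity) and that $\alpha\beta^{-1}(\beta^2 x)=\alpha\beta(x)$. I would then sort the resulting twenty-seven monomials by their nesting type. The single pure-torsion term $\circlearrowleft_{x,y,z}[\beta^2 x,[\beta y,\alpha z]]$ vanishes on its own by the BiHom-Jacobi identity \eqref{Bihom-jaco} for $[\cdot,\cdot]$. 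The terms in which $\rhd$ acts on a bracket are rewritten by the two BiHom-post-Lie axioms: the single term of the form (element)$\rhd$(bracket), namely $\beta^2 x\rhd[\beta y,\alpha z]$, is replaced using \eqref{bihom-post-lie condition1}, while the single term of the form (bracket)$\rhd$(element), namely $[\alpha^{-1}\beta^2 y,\beta z]\rhd\alpha\beta x$, is replaced using \eqref{bihom-post-lie condition2}, turning it into associators $as_{\alpha,\beta}$. After these substitutions the brackets-of-connections and the pure connection associators each regroup and cancel under the cyclic sum.

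The main obstacle is the bookkeeping in this last step. Each axiom \eqref{bihom-post-lie condition1}--\eqref{bihom-post-lie condition2} applies only once the arguments are matched to the exact templates $\alpha\beta(-)\rhd[-,-]$ and $[\beta(-),\alpha(-)]\rhd\beta(-)$, so the twist exponents $\alpha^{\pm1},\beta^{\pm1}$ carried by every monomial must first be aligned by repeated use of $\alpha\beta=\beta\alpha$ and the multiplicativity of $\rhd$ and $[\cdot,\cdot]$; for instance, applying \eqref{bihom-post-lie condition1} to $\beta^2 x\rhd[\beta y,\alpha z]$ requires rewriting $\beta^2 x$ as $\alpha\beta(\alpha^{-1}\beta x)$. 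I expect cyclic symmetrization to be essential throughout: individual monomials do not cancel, but each axiom, once summed over the three cyclic positions, annihilates a whole packet of terms at once, and the verification reduces to confirming that after all substitutions no residual monomial is left over.
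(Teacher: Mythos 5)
Your proposal is correct and takes essentially the same route as the paper: expand each cyclic term $\{\beta^{2}(x),\{\beta(y),\alpha(z)\}\}$ into nine monomials and cancel the cyclic sum of all twenty-seven using the BiHom-Jacobi identity of the torsion together with axioms \eqref{bihom-post-lie condition1} and \eqref{bihom-post-lie condition2}, after realigning twists via $\alpha\beta=\beta\alpha$ and the multiplicativity of $\rhd$ and $[\cdot,\cdot]$. If anything, you are more complete than the paper, which calls skewsymmetry obvious, omits multiplicativity, and merely asserts the final cancellation that you organize into explicit packets.
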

\begin{proof}
The BiHom-skew symmetry  is obvious. We will just check the BiHom-Jacobi identity. For all $x,y,z \in A,$ we have
\begin{align*}
\{\beta^2(x),\{\beta(y),\alpha(z)\}\}&=\beta^2(x)\rhd(\beta(y)\rhd\alpha(z))-(\alpha^{-1}\beta^2(y))\rhd\alpha\beta(z))\rhd\alpha\beta(x)\\
&+[\beta^2(x),\beta(y)\rhd\alpha(z)]-\beta^2(x)\rhd(\beta(z)\rhd\alpha(y))\\
&+(\alpha^{-1}\beta^2(z)\rhd\beta(y))\rhd\alpha\beta(x)-[\beta^2(x),\beta(z)\rhd\alpha(y)]\\
&+\beta^2(x)\rhd[\beta(y),\alpha(z)]+[\alpha^{-1}\beta^2(y),\beta(z)]\rhd\alpha\beta(x)\\
&+[\beta^2(x),[\beta(y),\alpha(z)]],
\end{align*}
\begin{align*}
\{\beta^2(y),\{\beta(z),\alpha(x)\}\}&=\beta^2(y)\rhd(\beta(z)\rhd\alpha(x))-(\alpha^{-1}\beta^2(z))\rhd\beta(x))\rhd\alpha\beta(y)\\
&+[\beta^2(y),\beta(z)\rhd\alpha(x)]-\beta^2(y)\rhd(\beta(x)\rhd\alpha(z))\\
&+(\alpha^{-1}\beta^2(x)\rhd\beta(z))\rhd\alpha\beta(y)-[\beta^2(y),\beta(x)\rhd\alpha(z)]\\
&+\beta^2(y)\rhd[\beta(z),\alpha(x)]+[\alpha^{-1}\beta^2(z),\beta(x)]\rhd\alpha\beta(y)\\
&+[\beta^2(y),[\beta(z),\alpha(x)]].
\end{align*}
Similarly,
\begin{align*}
\{\beta^2(z),\{\beta(x),\alpha(y)\}\}&=\beta^2(z)\rhd(\beta(x)\rhd\alpha(y))-(\alpha^{-1}\beta^2(x))\rhd\beta(y))\rhd\alpha\beta(z)\\
&+[\beta^2(z),\beta(zx)\rhd\alpha(y)]-\beta^2(z)\rhd(\beta(y)\rhd\alpha(x))\\
&+(\alpha^{-1}\beta^2(y)\rhd\beta(x))\rhd\alpha\beta(z)-[\beta^2(z),\beta(y)\rhd\alpha(x)]\\
&+\beta^2(z)\rhd[\beta(z),\alpha(y)]+[\alpha^{-1}\beta^2(x),\beta(y)]\rhd\alpha\beta(z)\\
&+[\beta^2(z),[\beta(x),\alpha(y)]].
\end{align*}

By the BiHom-Jacobi identity of BiHom-Lie algebras and  \eqref{bihom-post-lie condition1} and \eqref{bihom-post-lie condition2} we have
$$\{\beta^2(x),\{\beta(y),\alpha(z)\}\}+\{\beta^2(y),\{\beta(z),\alpha(x)\}\}+\{\beta^2(z),\{\beta(x),\alpha(y)\}\}=0.$$
\end{proof}

\begin{rmk}
Given a BiHom-post-Lie algebra  $(A,[\c,\c],\rhd,\a,\b)$. Suppose that $\rhd$ is commutative in the BiHom-sense, that is
$\b(x) \rhd \a(y)=\b(y) \rhd \a(x)$. Then the two Lie brackets   $[\c,\c]$ and $\{\c,\c\}$ coincide.
\end{rmk}

Now, recall that a  regular BiHom-algebra $(A,\c,\a,\b)$ is called admissible BiHom-Lie algebra if $(A,[\c,\c],\a,\b)$ is a BiHom-Lie algebra, where
$[x,y]=x \c y- \a^{-1}\b(y) \c \a \b^{-1}(x).$
\begin{cor}
  Let $(A,[\c,\c],\rhd,\a,\b)$ be a regular BiHom-post-Lie algebra. Define the product $\circ$ as
  \begin{equation}\label{AdmissibileBiHomLie}
    x\circ y=x\vartriangleright y+\frac{1}{2}[x,y],\ \forall\ x,y\in A.
  \end{equation}
  Then $(A,\circ,\a,\b)$ is an admissible BiHom-Lie algebra.
\end{cor}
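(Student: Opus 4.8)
The plan is to show that the skew-symmetric bracket associated with $\circ$ through the admissibility formula is nothing but the sub-adjacent bracket $\{\c,\c\}$ of Eq.~\eqref{liebracket}; once this is established, the fact that it is a BiHom-Lie bracket is exactly the content of Proposition~\ref{subadj lie alg}, and there is nothing left to prove.

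First I would verify that $(A,\circ,\a,\b)$ is a genuine regular BiHom-algebra. Since the underlying BiHom-post-Lie algebra is regular, $\a$ and $\b$ are commuting bijections; and because both $\rhd$ and $[\c,\c]$ are compatible with $\a$ and $\b$, linearity immediately gives $\a(x\circ y)=\a(x)\circ\a(y)$ and $\b(x\circ y)=\b(x)\circ\b(y)$. Thus $\a,\b$ are commuting bijective morphisms for $\circ$, as required.

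The core computation is to expand the induced bracket $[x,y]_\circ:=x\circ y-\a^{-1}\b(y)\circ\a\b^{-1}(x)$ using $x\circ y=x\rhd y+\tfrac12[x,y]$. The connection contributions assemble directly into $x\rhd y-\a^{-1}\b(y)\rhd\a\b^{-1}(x)$, while the torsion contributions produce $\tfrac12[x,y]-\tfrac12[\a^{-1}\b(y),\a\b^{-1}(x)]$. The one point needing care is simplifying this last commutator: writing $\a^{-1}\b(y)=\b(\a^{-1}(y))$ and $\a\b^{-1}(x)=\a(\b^{-1}(x))$ (which is legitimate because $\a\b=\b\a$) and applying the BiHom-skew-symmetry $[\b(a),\a(b)]=-[\b(b),\a(a)]$ with $a=\a^{-1}(y)$ and $b=\b^{-1}(x)$, I obtain $[\a^{-1}\b(y),\a\b^{-1}(x)]=-[x,y]$. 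Hence the torsion terms collapse to $\tfrac12[x,y]+\tfrac12[x,y]=[x,y]$, and altogether $[x,y]_\circ=x\rhd y-\a^{-1}\b(y)\rhd\a\b^{-1}(x)+[x,y]=\{x,y\}$.

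Finally, Proposition~\ref{subadj lie alg} guarantees that $\{\c,\c\}$ is a BiHom-Lie bracket on $A$; since it coincides with the bracket induced by $\circ$, the regular BiHom-algebra $(A,\circ,\a,\b)$ is admissible by definition, which completes the argument. The only genuine obstacle is the twisted skew-symmetry rewriting above; everything else is routine bookkeeping with the commuting maps $\a$ and $\b$.
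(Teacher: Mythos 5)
Your proof is correct and follows precisely the route the paper intends: the corollary is stated there without proof as an immediate consequence of Proposition \ref{subadj lie alg}, the whole point being that the commutator $x\circ y-\a^{-1}\b(y)\circ \a\b^{-1}(x)$ collapses to the sub-adjacent bracket $\{x,y\}$ of Eq.~\eqref{liebracket}. Your explicit verification of the key simplification $[\a^{-1}\b(y),\a\b^{-1}(x)]=-[x,y]$ via BiHom-skew-symmetry with $a=\a^{-1}(y)$, $b=\b^{-1}(x)$ is exactly the detail the paper leaves implicit, and it is carried out correctly.
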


\begin{pro}
Let $(A,[\c,\c],\rhd,\a,\b)$ be a regular BiHom-post-Lie algebra. Then
$(A,L_\rhd,\a,\b)$ is a representation of  $(A,\{\c,\c\},\a,\b)$,
where the bracket $\{\c,\c\}$ is defined by the identity \eqref{liebracket}.
\end{pro}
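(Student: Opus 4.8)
The plan is to verify directly that $(A,L_\rhd,\a,\b)$ satisfies the three defining identities \eqref{bihom-lie-rep-1}--\eqref{bihom-lie-rep-3} of a BiHom-Lie algebra representation (Definition \ref{defi:bihom-lie representation}), now taken with respect to the sub-adjacent bracket $\{\c,\c\}$ of \eqref{liebracket}, with carrier space $V=A$ and commuting maps $\f=\a$, $\p=\b$. Here $L_\rhd(x)y=x\rhd y$, and since the algebra is regular all the required inverses of $\a$ and $\b$ exist.

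The first two identities are immediate from the multiplicativity of the connection $\rhd$. Evaluating on an arbitrary $z\in A$, the relation $\a(x\rhd y)=\a(x)\rhd\a(y)$ gives $L_\rhd(\a(x))(\a(z))=\a(x)\rhd\a(z)=\a(x\rhd z)=\a\big(L_\rhd(x)z\big)$, which is exactly \eqref{bihom-lie-rep-1}; likewise $\b(x\rhd y)=\b(x)\rhd\b(y)$ yields \eqref{bihom-lie-rep-2}.

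The substance of the proof is \eqref{bihom-lie-rep-3}, namely $L_\rhd(\{\b(x),y\})\circ\b=L_\rhd(\a\b(x))\circ L_\rhd(y)-L_\rhd(\b(y))\circ L_\rhd(\a(x))$. I would first expand the argument using \eqref{liebracket}, obtaining $\{\b(x),y\}=\b(x)\rhd y-\a^{-1}\b(y)\rhd\a(x)+[\b(x),y]$ (since $\a\b^{-1}\b=\a$), then apply $L_\rhd(-)$ and evaluate on $\b(z)$, so that the target reduces to
\begin{align*}
&(\b(x)\rhd y)\rhd\b(z)-(\a^{-1}\b(y)\rhd\a(x))\rhd\b(z)+[\b(x),y]\rhd\b(z)\\
&\quad=\a\b(x)\rhd(y\rhd z)-\b(y)\rhd(\a(x)\rhd z).
\end{align*}
The decisive tool is the second BiHom-post-Lie axiom \eqref{bihom-post-lie condition2}: substituting $y\mapsto\a^{-1}(y)$ there converts $[\b(x),\a(y)]$ into $[\b(x),y]$, and, after unfolding $as_{\a,\b}(u,v,w)=\a(u)\rhd(v\rhd w)-(u\rhd v)\rhd\b(w)$ and simplifying with $\a\b\a^{-1}=\b$ and $\b\a^{-1}=\a^{-1}\b$, it yields exactly the displayed equality once the two terms of the form $(\,\cdot\rhd\cdot\,)\rhd\b(z)$ are transposed to the left and the resulting combination is recognized as $\{\b(x),y\}\rhd\b(z)$.

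The only genuine obstacle is bookkeeping: keeping the many $\a$, $\b$, $\a^{-1}$ factors correctly aligned while unpacking the associators and invoking $\a\b=\b\a$. No idea beyond \eqref{bihom-post-lie condition2} is needed, so the statement is essentially a repackaging of that axiom as the representation identity \eqref{bihom-lie-rep-3} for $L_\rhd$.
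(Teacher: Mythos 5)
Your proposal is correct and takes essentially the same route as the paper's proof: both dispose of \eqref{bihom-lie-rep-1}--\eqref{bihom-lie-rep-2} via multiplicativity of $\rhd$, and both establish \eqref{bihom-lie-rep-3} by expanding $\{\b(x),y\}\rhd\b(z)$ via \eqref{liebracket} and invoking axiom \eqref{bihom-post-lie condition2} (with $y$ replaced by $\a^{-1}(y)$, using $\a\b=\b\a$), whereupon the two terms of the form $(\,\cdot\rhd\cdot\,)\rhd\b(z)$ cancel. The only cosmetic difference is that you name the substitution $y\mapsto\a^{-1}(y)$ explicitly, while the paper performs it silently inside its displayed computation.
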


\begin{proof}
It is obvious to check that
$L_\rhd(\a(x)) \circ \a= \a \circ L_\rhd(x) $ and $L_\rhd(\b(x)) \circ \b= \b \circ L_\rhd(x)$.
To prove \eqref{bihom-lie-rep-3},  we compute as follows.  Let $x,y,z \in A$. Then
\begin{align*}
&L_\rhd\Big( \{\b(x),y\}\Big)\b(z)= \{\b(x),y\} \rhd \b(z)\\
=& (\b(x)\rhd y)\rhd \b(z)-(\a^{-1}\b(y)\rhd \a\b^{-1}(x))\rhd \b(z)  + [\b(x),y] \rhd \b(z) \\
 =& (\b(x)\rhd y)\rhd \b(z)-(\a^{-1}\b(y)\rhd \a(x))\rhd \b(z) + \a\b(x)\rhd (y \rhd z) \\
 &-(\b(x) \rhd y) \rhd \b(z) -\b(y) \rhd (\a(x) \rhd z) + (\a^{-1}\b(y) \rhd \a(x)) \rhd \b(z) \\
  =& \a\b(x)\rhd (y \rhd z) -\b(y) \rhd (\a(x) \rhd z) \\
 =& L_\rhd(\a\b(x)) \circ L_\rhd(y) (z)- L_\rhd(\b(y)) \circ L_\rhd(\a(x)) (z)
\end{align*}
and the proof is finished.
\end{proof}

\begin{pro}
  Let $(A,[\c,\c],\rhd,\a,\b)$ be a BiHom-post-Lie algebra. Then  $(A,-[\c,\c],\blacktriangleright,\a,\b)$, where \begin{equation}\label{BlackBiHomPosteLie}
x\blacktriangleright y=x\vartriangleright y+[x,y],\end{equation} is  also a BiHom-post-Lie algebra. In addition $(A,[\c,\c],\rhd,\a,\b)$  and  $(A,-[\c,\c],\blacktriangleright,\a,\b)$ have the same sub-adjacent BiHom-Lie algebra $A^C$.
\end{pro}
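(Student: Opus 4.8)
The plan is to verify the two BiHom-post-Lie axioms \eqref{bihom-post-lie condition1}--\eqref{bihom-post-lie condition2} for the operations $(-[\c,\c],\btr)$, and then to compare the two sub-adjacent brackets. A few preliminary points are immediate, and I would dispose of them first: multiplying a BiHom-Lie bracket by $-1$ rescales the BiHom-Jacobi sum of \eqref{Bihom-jaco} by $(-1)^2=1$ and preserves the BiHom-skew-symmetry, so $(A,-[\c,\c],\a,\b)$ is again a BiHom-Lie algebra; and since $\a,\b$ are morphisms for both $\rhd$ and $[\c,\c]$, they are morphisms for $\btr=\rhd+[\c,\c]$. Thus the real content lies in the two mixed identities and in the equality of the sub-adjacent brackets.

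First I would isolate a \emph{Leibniz reformulation} of the BiHom-Jacobi identity \eqref{Bihom-jaco}: for all $u,v,w\in A$,
\[
[\a\b(u),[v,w]]=[[\b(u),v],\b(w)]+[\b(v),[\a(u),w]].
\]
This follows from \eqref{Bihom-jaco} by the substitution $(x,y,z)\mapsto(\a\b^{-1}(u),\b^{-1}(v),\a^{-1}(w))$, after which the first cyclic term becomes $[\a\b(u),[v,w]]$ and the other two simplify, via the morphism property of $\a,\b$ and two applications of BiHom-skew-symmetry, into $-[[\b(u),v],\b(w)]$ and $-[\b(v),[\a(u),w]]$; it is the twisted analogue of the classical derivation rule $[u,[v,w]]=[[u,v],w]+[v,[u,w]]$. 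Granting it, axiom \eqref{bihom-post-lie condition1} for $\btr$ expands, via $\btr=\rhd+[\c,\c]$ and bilinearity, into the original \eqref{bihom-post-lie condition1} for $\rhd$ (which holds by hypothesis) plus exactly this Leibniz identity, and hence holds.

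The hard part is axiom \eqref{bihom-post-lie condition2}. I would expand the two associators in $as^{\btr}_{\a,\b}(\b(x),\a(y),z)-as^{\btr}_{\a,\b}(\b(y),\a(x),z)$, where $as^{\btr}_{\a,\b}(a,b,c)=\a(a)\btr(b\btr c)-(a\btr b)\btr\b(c)$, by substituting $\btr=\rhd+[\c,\c]$, so that each associator splits into eight summands. The purely $\rhd$-built summands reassemble into $as_{\a,\b}(\b(x),\a(y),z)-as_{\a,\b}(\b(y),\a(x),z)$, which by the original \eqref{bihom-post-lie condition2} equals $[\b(x),\a(y)]\rhd\b(z)$. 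I would sort the sixteen remaining summands into three families: (i) the connection terms $\mp[\b(\cdot),\a(\cdot)]\rhd\b(z)$, which by BiHom-skew-symmetry combine to $-2[\b(x),\a(y)]\rhd\b(z)$; (ii) the purely bracketed terms $[\a\b(\cdot),[\a(\cdot),z]]$ and $-[[\b(\cdot),\a(\cdot)],\b(z)]$, which by the Leibniz identity above and BiHom-skew-symmetry combine to $-[[\b(x),\a(y)],\b(z)]$; and (iii) the genuinely mixed terms $\a\b(\cdot)\rhd[\a(\cdot),z]$, $[\a\b(\cdot),\a(\cdot)\rhd z]$ and $-[\b(\cdot)\rhd\a(\cdot),\b(z)]$, to which I apply \eqref{bihom-post-lie condition1} to the first type and find that they cancel in antisymmetric $x\leftrightarrow y$ pairs, contributing $0$. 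Adding the $as_{\a,\b}$-contribution to families (i) and (ii) gives $[\b(x),\a(y)]\rhd\b(z)-2[\b(x),\a(y)]\rhd\b(z)-[[\b(x),\a(y)],\b(z)]=-[\b(x),\a(y)]\btr\b(z)$, which is precisely the left-hand side of \eqref{bihom-post-lie condition2} for the new structure, so that axiom holds.

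Finally, for the coincidence of the sub-adjacent brackets I would substitute $\btr=\rhd+[\c,\c]$ and the torsion $-[\c,\c]$ directly into \eqref{liebracket}. The two resulting copies of $[x,y]$ cancel the torsion term, leaving $x\rhd y-\a^{-1}\b(y)\rhd\a\b^{-1}(x)-[\a^{-1}\b(y),\a\b^{-1}(x)]$; and BiHom-skew-symmetry, applied with $u=\a^{-1}(y)$ and $v=\b^{-1}(x)$ so that $[\b(u),\a(v)]=-[\b(v),\a(u)]$ reads $[\a^{-1}\b(y),\a\b^{-1}(x)]=-[x,y]$, turns the last term into $+[x,y]$, reproducing \eqref{liebracket} verbatim. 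Hence both structures yield the same $A^C$. The main obstacle throughout is the bookkeeping in the third step: keeping the sixteen expanded summands correctly twisted and correctly signed, and recognizing which of the three structural identities—the Leibniz form of \eqref{Bihom-jaco}, axiom \eqref{bihom-post-lie condition1}, and BiHom-skew-symmetry—disposes of each family.
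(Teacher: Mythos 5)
Your proposal is correct and takes essentially the same route as the paper: both proofs expand $\blacktriangleright=\rhd+[\c,\c]$, reduce axiom \eqref{bihom-post-lie condition1} to the original \eqref{bihom-post-lie condition1} plus the BiHom-Jacobi identity applied to $(\a\b^{-1}(x),\b^{-1}(y),\a^{-1}(z))$ --- your ``Leibniz reformulation'' is exactly the recombination the paper performs inline via BiHom-skew-symmetry --- and verify \eqref{bihom-post-lie condition2} by the same sixteen-term expansion and cancellation, with both arguments implicitly using regularity of $\a,\b$. Your only additions are presentational: packaging the Jacobi substitution as a named lemma, sorting the cancelling terms into explicit families, and writing out the coincidence of sub-adjacent brackets, which the paper asserts without computation.
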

\begin{proof}
For all $x,y,z\in A,$ we check that $(A,-[\cdot,\cdot],\blacktriangleright,\alpha,\beta)$ is BiHom-post-Lie algebras. in fact we have
\begin{align*}
-\alpha\beta(x)\blacktriangleright[y,z]&=-(\alpha\beta(x)\rhd[y,z]+[\alpha\beta(x),[y,z]])\\
&=-\alpha\beta(x)\rhd[y,z]-[\beta^2(\alpha\beta^{-1}(x)),[\beta(\beta^{-1}(y)),\alpha(\alpha^{-1}(z))]],
\end{align*}
\begin{align*}
-[\beta(x)\blacktriangleright y,\beta(z)]=&-[\beta(x)\rhd y+[\beta(x),y],\beta(z)]\\
=&-[\beta(x)\rhd y,\beta(z)]+[\alpha^{-1}\beta^2(\beta^{-1}(z)),[\alpha(x),\alpha\beta^{-1}(y))]]
\end{align*}
and
\begin{align*}
-[\beta(y),\alpha(x)\blacktriangleright z]=&-[\beta(y),\alpha(x)\rhd z+[\alpha(x),z]]\\
&=-[\beta(y),\alpha(x)\rhd z]+[\beta^2(\beta^{-1}(y)),[\beta\alpha^{-1}(z),\alpha(\alpha\beta^{-1}(x))]].
\end{align*}
Then we  obtain
\begin{align*}
&-\alpha\beta(x)\blacktriangleright[y,z]+[\beta(x)\blacktriangleright y,\beta(z)]+[\beta(y),\alpha(x)\blacktriangleright z]\\
&=-\alpha\beta(x)\rhd[y,z]-[\beta(x)\rhd y,\beta(z)]-[\beta(y),\alpha(x)\rhd z]\\
&-[\beta^2(\alpha\beta^{-1}(x)),[\beta(\beta^{-1}(y)),\alpha(\alpha^{-1}(z))]]\\
&+[\alpha^{-1}\beta^2(\beta^{-1}(z)),[\alpha(x),\alpha\beta^{-1}(y))]]\\
&+[\beta^2(\beta^{-1}(y)),[\beta\alpha^{-1}(z),\alpha(\alpha\beta^{-1}(x))]]=0.
\end{align*}
Hence,  the first condition hold using \eqref{bihom-post-lie condition1} and BiHom-Jacobi-Identity. To check the second condition, we compute as follow
\begin{align*}
&-[\beta(x),\alpha(y)]\blacktriangleright\beta(z)-as_{\alpha,\beta}^{\blacktriangleright}(\beta(x),\alpha(y),z)+as_{\alpha,\beta}^{\blacktriangleright}(\beta(y),\alpha(x),z)\\
&=-[\beta(x),\alpha(y)]\blacktriangleright\beta(z)+[[\beta(x),\alpha(y)],\beta(z)]+\alpha\beta(x)\rhd(\alpha(y)\rhd z)\\
&-(\beta(x)\rhd\alpha(y))\rhd \beta(z)+\alpha\beta(x)\rhd[\alpha(y),z]+[\alpha\beta(x),\alpha(y)\rhd z]\\
&+[\alpha\beta(x),[\alpha(y),z]]-[\beta(x),\alpha(y)]\rhd\beta(z)-[\beta(x)\rhd\alpha(y),\beta(z)]\\
&-[[\beta(x),\alpha(y)],\beta(z)]-\alpha\beta(y)\rhd(\alpha(x)\rhd z)+(\beta(y)\rhd \alpha(x))\rhd\beta(z)\\
&-\alpha\beta(y)\rhd[\alpha(x),z]-[\alpha\beta(y),\alpha(x)\rhd z]-[\alpha\beta(y),[\alpha(x),z]]\\
&+[\beta(y),\alpha(x)]\rhd\beta(z)+[\beta(y)\rhd \alpha(x),\beta(z)]+[[\beta(y),\alpha(x)],\beta(z)]\\
&=0,
\end{align*}
where $as_{\alpha,\beta}^{\blacktriangleright}(x,y,z)=x\blacktriangleright (y\blacktriangleright z)-(x\blacktriangleright y)\blacktriangleright z$.
\end{proof}
\begin{thm}
  Let $(A,[\c,\c],\rhd,\a,\b)$ be a BiHom-post-Lie algebra. Define the double bracket  $\llbracket \c,\c \rrbracket$ on $A\times A$ by
  \begin{align}
    \llbracket (a,x),(b,y) \rrbracket&=(a\rhd b-\a^{-1}\b(b)\rhd\a\b^{-1}(a)+[a,b],\nonumber\\&
   \label{DoubleBarcket} \qquad\qquad\qquad\qquad a\rhd y-\a^{-1}\b(b)\rhd\a\b^{-1}(x)+[x,y])
  \end{align}
  for all $a,b,x,y\in A.$ Then $(A\times A,\llbracket \c,\c \rrbracket,\a^{\times2},\b^{\times2})$ is  a BiHom-Lie algebra.
\end{thm}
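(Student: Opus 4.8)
The plan is to avoid a direct verification of BiHom-skew-symmetry and the BiHom-Jacobi identity for $\llbracket\c,\c\rrbracket$, and instead to recognize the double bracket as a semidirect-product bracket, so that the result follows at once from the semidirect-sum Proposition established earlier. Reading off \eqref{DoubleBarcket}, the first coordinate of $\llbracket(a,x),(b,y)\rrbracket$ is exactly the sub-adjacent bracket $\{a,b\}=a\rhd b-\a^{-1}\b(b)\rhd\a\b^{-1}(a)+[a,b]$ of Proposition \ref{subadj lie alg}, whereas the second coordinate is
$$a\rhd y-\a^{-1}\b(b)\rhd\a\b^{-1}(x)+[x,y]=L_\rhd(a)y-L_\rhd(\a^{-1}\b(b))\a\b^{-1}(x)+[x,y].$$
This is precisely the fiber part of the semidirect-sum bracket with base the sub-adjacent BiHom-Lie algebra $A^C=(A,\{\c,\c\},\a,\b)$, fiber $V=A$ carrying the torsion bracket $[\c,\c]$, twisting maps $\f=\a$, $\p=\b$, and representation $\rho=L_\rhd$; indeed the twist $\f\p^{-1}=\a\b^{-1}$ reproduces \eqref{DoubleBarcket} term by term. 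In other words, $(A\times A,\llbracket\c,\c\rrbracket,\a^{\times2},\b^{\times2})$ is nothing but $A^C\ltimes_{L_\rhd,\a,\b}^{\a,\b}A$.

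Granting this identification, I would check that $(A,[\c,\c],L_\rhd,\a,\b)$ is an $A^C$-module $\K$-algebra, which requires three ingredients. First, $A^C$ is a regular BiHom-Lie algebra by Proposition \ref{subadj lie alg}. Second, $(A,L_\rhd,\a,\b)$ is a representation of $A^C$: the two intertwining relations \eqref{bihom-lie-rep-1}--\eqref{bihom-lie-rep-2} reduce to the multiplicativity $\a(x\rhd y)=\a(x)\rhd\a(y)$ and $\b(x\rhd y)=\b(x)\rhd\b(y)$, while the third relation \eqref{bihom-lie-rep-3} is exactly the identity $L_\rhd(\{\b(x),y\})\b(z)=L_\rhd(\a\b(x))L_\rhd(y)(z)-L_\rhd(\b(y))L_\rhd(\a(x))(z)$ proved in the preceding Proposition. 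Third, $(A,[\c,\c],\a,\b)$ is a BiHom-Lie algebra by hypothesis.

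The crux is the compatibility condition \eqref{repKalgebras}. Specializing $\rho=L_\rhd$, $\p=\b$, and taking the fiber bracket to be $[\c,\c]$, it becomes $\a\b(x)\rhd[u,v]=[\b(x)\rhd u,\b(v)]+[\b(u),\a(x)\rhd v]$, which is verbatim the first BiHom-post-Lie axiom \eqref{bihom-post-lie condition1}. Thus all hypotheses of the semidirect-sum Proposition hold, and the theorem is immediate. The hard part is only bookkeeping: one must assign the roles correctly, so that the base carries the sub-adjacent bracket $\{\c,\c\}$ while the fiber carries the torsion $[\c,\c]$ and the fiber twists are taken to be $\a,\b$ themselves, and one must track the $\a^{-1}\b$ and $\a\b^{-1}$ factors so that they match \eqref{DoubleBarcket}; the regularity of $\a,\b$, implicit in the presence of $\a^{-1},\b^{-1}$ in the bracket, is what legitimizes these inverses.
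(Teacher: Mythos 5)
Your proof is correct, and it takes a genuinely different route from the paper's. The paper argues by direct computation: it notes BiHom-skew-symmetry is immediate and then expands the cyclic sum $\circlearrowleft\,\llbracket (\b^2(a),\b^2(x)),\llbracket (\b(b),\b(y)),(\a(c),\a(z))\rrbracket\rrbracket$ coordinatewise, observing that the first coordinate is the cyclic sum for the sub-adjacent bracket \eqref{liebracket}, which vanishes by Proposition \ref{subadj lie alg}, while the nine terms in the second coordinate cancel using the BiHom-Jacobi identity and the axioms \eqref{bihom-post-lie condition1}--\eqref{bihom-post-lie condition2}. You instead recognize $(A\times A,\llbracket\c,\c\rrbracket,\a^{\times2},\b^{\times2})$ as the semidirect sum $A^C\ltimes_{L_\rhd,\a,\b}^{\a,\b}A$, and your bookkeeping checks out: taking the base to be $A^C=(A,\{\c,\c\},\a,\b)$, the fiber to be $(A,[\c,\c],\a,\b)$ (the underlying BiHom-Lie algebra, hence regular BiHom-Lie as required), $\rho=L_\rhd$, and $\f=\a$, $\p=\b$, the twist $\f\p^{-1}=\a\b^{-1}$ and the term $\rho(\a^{-1}\b(b))\f\p^{-1}(x)=\a^{-1}\b(b)\rhd\a\b^{-1}(x)$ reproduce \eqref{DoubleBarcket} term by term; the three hypotheses of the (unlabeled) semidirect-sum characterization of $A$-module $\mathbb{K}$-algebras are exactly Proposition \ref{subadj lie alg}, the preceding proposition that $(A,L_\rhd,\a,\b)$ is a representation of $A^C$ (with \eqref{bihom-lie-rep-1}--\eqref{bihom-lie-rep-2} reducing to multiplicativity of $\rhd$), and the identity \eqref{bihom-post-lie condition1} read as the compatibility \eqref{repKalgebras}. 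What your route buys is structural insight and economy: it explains the theorem as ``the sub-adjacent algebra acting on the torsion via $L_\rhd$,'' and the cancellations the paper performs by hand in the second coordinate are precisely the content of the representation and module-algebra conditions you cite; what the paper's direct computation buys is independence from the chain of earlier propositions (in particular it does not rest on the semidirect-sum equivalence, whose proof in the paper is itself only sketched). One point worth making explicit, which you flagged correctly: the theorem's statement omits the word ``regular,'' but the occurrence of $\a^{-1},\b^{-1}$ in \eqref{DoubleBarcket} forces it, and both your argument (via Proposition \ref{subadj lie alg} and the regularity hypotheses of the semidirect-sum proposition) and the paper's rely on that implicit assumption.
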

\begin{proof}

Let $x,y,z,a,b,c\in A$, It’s obvious that $$\llbracket (\b(a),\b(x)),(\a(b),\a(y)) \rrbracket=-\llbracket (\b(b),\b(y)),(\a(a),\a(x)) \rrbracket.$$
 On the other hand, we have
\begin{align*}
 &\circlearrowleft_{(a,x),(b,y),(c,z)} \llbracket (\b^2(a),\b^2(x)),\llbracket (\b(b),\b(y)),(\a(c),\a(z)) \rrbracket \rrbracket
 \\=&\big(\circlearrowleft_{a,b,c} \{\b^2(a),\{\b(b),\a(c)\}\},\\&\circlearrowleft_{(a,x),(b,y),(c,z)}\b^2(a)\rhd (\b(b)\rhd \a(z))-\b^2(a)\rhd( \b(c)\rhd\a(y))+\b^2(a)\rhd [\b(y),\a(z)]
\\&\qquad\qquad\qquad+(\b(b)\rhd \a(c))\rhd \a\b(x)-(\b(c)\rhd\a(b))\rhd \a\b(x)
+[\b(b),\a(c)]\rhd \a\b(x)
\\&\qquad\qquad\qquad+[\b^2(x),\b(b)\rhd \a(z)]-[\b^2(x),\b(c)\rhd\a(y)]+[\b^2(x),[\b(y),\a(z)]]\big)
  \\ =&(0,0),
\end{align*}
where the bracket $\{\c,\c\}$ is defined in \eqref{liebracket}. Therefor $(A\times A,\llbracket \c,\c \rrbracket,\a^{\times2},\b^{\times2})$ is  a BiHom-Lie algebra.
\end{proof}
It is well known that BiHom-assciative, BiHom-pre-Lie and BiHom-Novikov algebras are BiHom-Lie admissible algebras. Another class of  BiHom-Lie admissible algebras is the variety of BiHom-LR algebras. A BiHom-LR algebra is a tuple $(A,  \cdot , \alpha , \beta )$ satisfying the following conditins
\begin{align}\label{LRCondition1}
   &(x\c y)\c\a(z)=(x\c z)\c\a(y) ,\\
  \label{LRCondition2} & \b(x)\c(y\c z)=\b(y)\c(x\c z),\ \forall x,y,z\in A.
\end{align}
\begin{pro}\label{LR to Post}
Let $(A,  \cdot , \alpha , \beta )$ be a regular BiHom-LR algebra. Then $(A, [\c,\c] ,\rhd , \alpha , \beta )$ is a BiHom-post-Lie algebra, where
\begin{equation}\label{CommutatorLR}
x\rhd y=-x\c y\ \ \textrm{and}\ \ [x,y]= x\c y- \a^{-1}\b(y)\c  \a\b^{-1}(x),\ \forall x,y\in A.
\end{equation}
\end{pro}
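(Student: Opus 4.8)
The plan is to verify directly the three requirements in the definition of a BiHom-post-Lie algebra for the data $x \rhd y = -x\cdot y$ and $[x,y]=x\cdot y-\a^{-1}\b(y)\cdot\a\b^{-1}(x)$. First, $(A,[\cdot,\cdot],\a,\b)$ is a BiHom-Lie algebra: this is precisely the assertion that a regular BiHom-LR algebra is BiHom-Lie admissible, recalled just above, so the commutator bracket \eqref{CommutatorLR} already satisfies BiHom-skewsymmetry and the BiHom-Jacobi identity. Second, the multiplicativity of $\rhd$ with respect to $\a$ and $\b$ is immediate, since $\rhd=-\cdot$ and $\a,\b$ are morphisms of $\cdot$. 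The only real content is therefore to check the two post-Lie axioms \eqref{bihom-post-lie condition1} and \eqref{bihom-post-lie condition2}. Before starting I would record the preliminary fact that, $\a$ and $\b$ being commuting bijective morphisms of $\cdot$, the twisted maps $\a\b^{-1}$, $\a^{-1}\b$, $\a^{-1}\b^2$ are again morphisms of $\cdot$; this is what permits distributing them over products when they are applied to bracket expressions.

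For \eqref{bihom-post-lie condition2} I would first substitute $\rhd=-\cdot$ into $as_{\a,\b}(u,v,w)=\a(u)\rhd(v\rhd w)-(u\rhd v)\rhd\b(w)$ and observe that the two sign changes cancel, so that $as_{\a,\b}$ coincides with the ordinary BiHom-associator $\a(u)\cdot(v\cdot w)-(u\cdot v)\cdot\b(w)$ of $\cdot$. Expanding the left side $[\b(x),\a(y)]\rhd\b(z)$ via $[\b(x),\a(y)]=\b(x)\cdot\a(y)-\b(y)\cdot\a(x)$, the terms $(\b(x)\cdot\a(y))\cdot\b(z)$ and $(\b(y)\cdot\a(x))\cdot\b(z)$ occurring on both sides cancel, and the identity collapses to $\a\b(x)\cdot(\a(y)\cdot z)=\a\b(y)\cdot(\a(x)\cdot z)$. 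This is exactly the left BiHom-LR axiom \eqref{LRCondition2} with $a=\a(x),b=\a(y),c=z$, using $\a\b=\b\a$.

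For \eqref{bihom-post-lie condition1} I would expand all three brackets. The left side $\a\b(x)\rhd[y,z]$ produces $-\a\b(x)\cdot(y\cdot z)$ and $\a\b(x)\cdot(\a^{-1}\b(z)\cdot\a\b^{-1}(y))$; the two right-hand brackets $[\b(x)\rhd y,\b(z)]$ and $[\b(y),\a(x)\rhd z]$ each unfold into two terms, where I use the morphism property of $\a\b^{-1}$ and $\a^{-1}\b$ to pull these maps inside the products. Matching term by term, the left axiom \eqref{LRCondition2} identifies $-\a\b(x)\cdot(y\cdot z)$ with $-\b(y)\cdot(\a(x)\cdot z)$ and identifies $\a\b(x)\cdot(\a^{-1}\b(z)\cdot\a\b^{-1}(y))$ with $\a^{-1}\b^2(z)\cdot(\a(x)\cdot\a\b^{-1}(y))$; the two surviving terms $-(\b(x)\cdot y)\cdot\b(z)$ and $(\b(x)\cdot\a^{-1}\b(z))\cdot\a(y)$ cancel each other by the right BiHom-LR axiom \eqref{LRCondition1} (writing $\b(z)=\a(\a^{-1}\b(z))$ and applying it with $a=\b(x),b=y,c=\a^{-1}\b(z)$). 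Thus \eqref{bihom-post-lie condition1} holds as well.

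The bookkeeping in \eqref{bihom-post-lie condition1} is where the main effort lies: one must track the signs coming from $\rhd=-\cdot$, apply the correct twist ($\a\b^{-1}$ versus $\a^{-1}\b$) inside each unfolded bracket, and recognize which of the two LR axioms re-expresses each monomial. The computation for \eqref{bihom-post-lie condition2} is comparatively short, and the BiHom-Lie admissibility is either quoted from the recalled fact or, if a self-contained argument is preferred, checked by the same style of term matching applied to the BiHom-Jacobi identity of the commutator bracket.
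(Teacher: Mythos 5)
Your proposal is correct and takes essentially the same route as the paper: direct expansion of all defining identities followed by pairwise cancellation via the two LR axioms \eqref{LRCondition1} and \eqref{LRCondition2} — your term-matching for \eqref{bihom-post-lie condition1} reproduces the paper's computation exactly (same three cancelling pairs, same axiom instances), and your observation that $\rhd=-\,\cdot$ makes $as_{\a,\b}$ coincide with the BiHom-associator of $\cdot$, collapsing \eqref{bihom-post-lie condition2} to \eqref{LRCondition2}, supplies a verification the paper actually leaves implicit. The one point to repair is the BiHom-Jacobi identity: the ``recalled fact'' that regular BiHom-LR algebras are BiHom-Lie admissible is announced in the paper without proof precisely as a preview of this proposition, so quoting it is circular — you should execute the self-contained option you mention, expanding the cyclic sum $\circlearrowleft_{x,y,z}[\b^2(x),[\b(y),\a(z)]]$ into twelve monomials that cancel in pairs by the same use of \eqref{LRCondition1} and \eqref{LRCondition2}, which is in fact the bulk of the paper's own proof.
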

\begin{proof}
The BiHom-skew symmetry  is obvious. We will  check the BiHom-Jacobi identity. For all $x,y,z \in A,$ we have
\begin{align*}
\circlearrowleft_{x,y,z}\{\beta^2(x),\{\beta(y),\alpha(z)\}\}&=\beta^2(x)\c(\beta(y)\c\alpha(z))-(\alpha^{-1}\beta^2(y))\c\alpha\beta(z))\c\alpha\beta(x)\\
&-\beta^2(x)\c(\beta(z)\c\alpha(y))+(\alpha^{-1}\beta^2(z)\c\beta(y))\c\alpha\beta(x)\\
&+\beta^2(y)\c(\beta(z)\c\alpha(x))-(\alpha^{-1}\beta^2(z))\c\beta(x))\c\alpha\beta(y)\\
&-\beta^2(y)\c(\beta(x)\c\alpha(z))+(\alpha^{-1}\beta^2(x)\c\beta(z))\c\alpha\beta(y)\\
&+\beta^2(z)\c(\beta(x)\c\alpha(y))-(\alpha^{-1}\beta^2(x))\c\beta(y))\c\alpha\beta(z)\\
&-\beta^2(z)\c(\beta(y)\c\alpha(x))+(\alpha^{-1}\beta^2(y)\c\beta(x))\c\alpha\beta(z).
\end{align*}

By the identities \eqref{LRCondition1} and  \eqref{LRCondition2} of BiHom-LR  algebra, we have
$$\circlearrowleft_{x,y,z}\{\beta^2(x),\{\beta(y),\alpha(z)\}\}=0.$$
similarly, we have
\begin{align*}
   & \alpha \beta \left( x\right) \rhd \left[ y,z\right] -\left[ \beta
\left( y\right) , \alpha \left( x\right) \rhd z
\right] -\left[ \beta \left( x\right) \rhd y,\beta \left( z\right)
\right] \\
   =& -(\alpha \beta \left( x\right) \c \left[ y,z\right] -\left[ \beta
\left( y\right) , \alpha \left( x\right) \c z
\right] -\left[ \beta \left( x\right) \c y,\beta \left( z\right)
\right] )\\
=& -(\alpha \beta \left( x\right) \c \left( y\c z\right)-\alpha \beta \left( x\right) \c \left(\a^{-1}\b(z)\c\a\b^{-1}(y)\right) -\beta
\left( y\right) \c( \alpha \left( x\right) \c z
)\\&+( \b \left( x\right) \c \a^{-1}\b(z)
)\c\a
\left( y\right) - (\beta \left( x\right) \c y)\c\beta \left( z\right)+\a^{-1}\beta^2 \left( z\right)\c(\a \left( x\right) \c\a\b^{-1}( y))
)\\
=&0
\end{align*}
\end{proof}
\begin{df}\cite{Liu&Makhlouf&Menini&Panaite}\label{DefBiHomTriDend}
  A BiHom-tri-dendriform algebra is a 6-tuple $(A, \prec , \succ , \cdot , \alpha , \beta )$,
where $A$ is a linear space and $\prec , \succ , \cdot :A\otimes A\rightarrow A$ and
$\alpha , \beta :A\rightarrow A$ are linear maps
satisfying
\begin{eqnarray}
&&\alpha \circ \beta =\beta \circ \alpha , \label{BiHomtridend1} \\
&&\alpha (x\prec y)=\alpha (x)\prec \alpha (y), ~
\alpha (x\succ y)=\alpha (x)\succ \alpha (y), ~
\alpha (x\cdot y)=\alpha (x)\cdot \alpha (y), \label{BiHomtridend4} \\
&&\beta (x\prec y)=\beta (x)\prec \beta (y), ~
\beta (x\succ y)=\beta (x)\succ \beta (y), ~
\beta (x\cdot y)=\beta (x)\cdot \beta (y), \label{BiHomtridend7} \\
&&(x\prec y)\prec \beta (z)=\alpha (x)\prec (y\prec z+y\succ z+y\cdot z),  \label{BiHomtridend8} \\
&&(x \succ y)\prec \beta (z)=\alpha (x)\succ (y\prec z), \label{BiHomtridend9} \\
&&\alpha (x)\succ (y\succ z)=(x\prec y+x\succ y+x\cdot y)\succ \beta (z), \label{BiHomtridend10} \\
&&\alpha (x)\cdot (y\succ z)=(x\prec y)\cdot \beta (z), \label{BiHomtridend11} \\
&&\alpha (x)\succ (y\cdot z)=(x\succ y)\cdot \beta (z), \label{BiHomtridend12} \\
&&\alpha (x)\cdot (y\prec z)=(x\cdot y)\prec \beta (z), \label{BiHomtridend13} \\
&&\alpha (x)\cdot (y\cdot z)=(x\cdot y)\cdot \beta (z),  \label{BiHomtridend14}
\end{eqnarray}
for all $x, y, z\in A$.
We call $\alpha $ and $\beta $ (in this order) the structure maps
of $A$.

A morphism $f:(A, \prec , \succ , \cdot , \alpha , \beta )\rightarrow (A', \prec ', \succ ', \cdot ', \alpha ', \beta ')$ of
BiHom-tri-dendriform algebras is a linear map
$f:A\rightarrow A'$ satisfying $f(x\prec y)=f(x)\prec ' f(y)$, $f(x\succ y)=f(x)\succ ' f(y)$ and
$f(x\cdot y)=f(x)\cdot ' f(y)$,
for all $x, y\in A$,
as well as $f\circ \alpha =\alpha '\circ f$ and $f\circ \beta =\beta '\circ f$.
\end{df}
\begin{ex}
  Let $A =< e_1, e_2 > $ be a tow dimensional vector space. Given the multiplications
\begin{equation}\label{ExTriDen}
e_2 \prec e_2 = e_2 \succ e_2 = ae_1, \ e_2 \c e_2 = -ae_1,\end{equation}
and the  linear maps $ \alpha,\beta: A\to A$ defined by
\begin{align}\label{morphismExTriDen1}
   &\alpha(e_1) = e_1, \alpha(e_2) = e_1 +e_2,  \\
  &\beta(e_1) = e_1, \beta(e_2) = 2e_1 +e_2.
\end{align}
Then $(A, \prec , \succ , \cdot , \alpha , \beta )$ is a regular BiHom-tri-dendriform algebra.
\end{ex}
\begin{pro}
Let $(A,\prec,\succ,\c,\a,\b)$ be a regular BiHom-tri-dendriform algebra. Then $(A,[\c,\c],\rhd,\a,\b)$ is a BiHom-post-Lie algebra, where
\begin{align}
\label{tri to post 1}[x,y]=& x\c y- \a^{-1}\b(y)\c  \a\b^{-1}(x),\\
\label{tri to post 2} x \rhd y=& x \succ y -\a^{-1}\b(y)\prec \a\b^{-1}(x),
\end{align}
for any $x,y \in A$.
\end{pro}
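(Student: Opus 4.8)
The plan is to verify, in turn, the three requirements in the definition of a BiHom-post-Lie algebra: that $(A,[\c,\c],\a,\b)$ is a BiHom-Lie algebra, that $\a$ and $\b$ are morphisms for $\rhd$, and that the two structural identities \eqref{bihom-post-lie condition1} and \eqref{bihom-post-lie condition2} hold. The BiHom-Lie part costs almost nothing: relation \eqref{BiHomtridend14} says exactly that $(A,\c,\a,\b)$ is a BiHom-associative algebra, and since BiHom-associative algebras are BiHom-Lie admissible, the commutator $[x,y]=x\c y-\a^{-1}\b(y)\c\a\b^{-1}(x)$ of \eqref{tri to post 1} is precisely its admissible BiHom-Lie bracket; hence $(A,[\c,\c],\a,\b)$ is a BiHom-Lie algebra with no further computation. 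Multiplicativity of $\rhd$ is equally direct: since $\a$ and $\b$ commute and are morphisms for both $\prec$ and $\succ$ by \eqref{BiHomtridend4} and \eqref{BiHomtridend7}, applying $\a$ (resp.\ $\b$) to the definition \eqref{tri to post 2} and using the commutation of $\a,\b$ to simplify the twists yields $\a(x\rhd y)=\a(x)\rhd\a(y)$ and $\b(x\rhd y)=\b(x)\rhd\b(y)$.

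The real content is the verification of the two compatibility axioms. I would substitute the definitions \eqref{tri to post 1}--\eqref{tri to post 2} into each side, expand, and collapse the resulting sums with the mixed relations \eqref{BiHomtridend8}--\eqref{BiHomtridend14}. As an organizing device it helps to set $x\ast y:=x\prec y+x\succ y+x\c y$, the total product, which is BiHom-associative (this is the defining feature of tri-dendriform splittings); a short computation then identifies the sub-adjacent bracket \eqref{liebracket} of the candidate BiHom-post-Lie algebra with the admissible bracket $\{x,y\}=x\ast y-\a^{-1}\b(y)\ast\a\b^{-1}(x)$ of $\ast$, which is a useful internal consistency check. In practice condition \eqref{bihom-post-lie condition1} reduces, after normalizing the twists, to the three relations \eqref{BiHomtridend11}, \eqref{BiHomtridend12}, \eqref{BiHomtridend13} that couple $\c$ with $\succ$ and $\prec$, whereas condition \eqref{bihom-post-lie condition2} follows from \eqref{BiHomtridend8}, \eqref{BiHomtridend9}, \eqref{BiHomtridend10}, the axioms governing $\prec$ and $\succ$ alone, with \eqref{BiHomtridend14} entering only through the BiHom-Lie bracket already dealt with above.

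The main obstacle I expect is not conceptual but bookkeeping with the twisting maps. Every defining relation carries a fixed pattern, $\a$ on the left argument and $\b$ on the right, so before any of \eqref{BiHomtridend8}--\eqref{BiHomtridend14} can be applied each monomial must first be rewritten in that normalized form using the morphism identities \eqref{BiHomtridend4}, \eqref{BiHomtridend7} and the commutation of $\a,\b$; only then do the mixed terms cancel in pairs, exactly as in the untwisted post-Lie computation. The $\a^{-1}\b$ and $\a\b^{-1}$ factors attached to the second argument of $\rhd$ and to the second argument of $[\c,\c]$ are the source of most potential errors, so I would track these indices explicitly at every step rather than attempting to reconcile them at the end.
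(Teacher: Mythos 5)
Your proposal is correct and takes essentially the same route as the paper: direct expansion of both compatibility axioms followed by cancellation via the mixed tri-dendriform relations, with \eqref{BiHomtridend11}--\eqref{BiHomtridend13} disposing of \eqref{bihom-post-lie condition1} and \eqref{BiHomtridend8}--\eqref{BiHomtridend10} handling \eqref{bihom-post-lie condition2}, which is exactly the split the paper uses (the paper cites only \eqref{BiHomtridend12} and \eqref{BiHomtridend13} for the first axiom, but its computation implicitly needs \eqref{BiHomtridend11} as well, as you correctly flag). Your further observation that the BiHom-Lie part of the structure comes for free from the BiHom-associativity \eqref{BiHomtridend14} of $\cdot$ agrees with the paper's standing remark that BiHom-associative algebras are BiHom-Lie admissible.
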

\begin{proof}
For $x,y,z\in A$ we prove that%
\[
\alpha \beta \left( x\right) \rhd \left[ y,z\right] =\left[ \beta
\left( y\right) ,\left( \alpha \left( x\right) \rhd z\right) %
\right] +\left[ \beta \left( x\right) \rhd y,\beta \left( z\right) %
\right]
\]

We compute:%
\begin{eqnarray*}
\alpha \beta \left( x\right) \rhd \left[ y,z\right] &=&\alpha
\beta \left( x\right) \rhd \left( y\cdot z\right) -\alpha \beta
\left( x\right) \rhd \left( \alpha ^{-1}\beta \left( z\right)
\cdot \alpha \beta ^{-1}\left( y\right) \right) \\
&=&\alpha \beta \left( x\right) \succ \left( y\cdot z\right) -\left( \alpha
^{-1}\beta \left( y\right) \cdot \alpha ^{-1}\beta \left( z\right) \right)
\prec \alpha ^{2}\left( x\right) \\
\text{ \ \ }-\alpha \beta \left( x\right) &\succ &\left( \alpha ^{-1}\beta
\left( z\right) \cdot \alpha \beta ^{-1}\left( y\right) \right) +\left(
\alpha ^{-2}\beta \left( z\right) \cdot \left( y\right) \right) \prec \alpha
^{2}\left( x\right) .
\end{eqnarray*}

Then%
\begin{eqnarray*}
\left[ \beta \left( y\right) ,\left( \alpha \left( x\right) \rhd
z\right) \right] &=&\left[ \beta \left( y\right) ,\alpha \left( x\right)
\succ z\right] -\left[ \beta \left( y\right) ,\alpha ^{-1}\beta \left(
z\right) \prec \cdot \alpha ^{2}\beta ^{-1}\left( x\right) \right] \\
&=&\beta \left( y\right) \cdot \left( \alpha \left( x\right) \succ z\right)
-\beta \left( x\right) \succ \alpha ^{-1}\beta \left( z\right) \cdot \alpha
\left( y\right) \\
&&-\beta \left( y\right) \cdot \left( \alpha ^{-1}\beta \left( z\right)
\prec \cdot \alpha ^{2}\beta ^{-1}\left( x\right) \right) +\left( \alpha
^{-2}\beta ^{2}\left( z\right) \prec \alpha \left( x\right) \right) \cdot
\alpha \left( y\right)
\end{eqnarray*}

Similarly, we have
\begin{eqnarray*}
\left[ \beta \left( x\right) \rhd y,\beta \left( z\right) \right]
&=&\left[ \beta \left( x\right) \succ y,\beta \left( z\right) \right] -\left[
\alpha ^{-1}\beta \left( y\right) \prec \alpha \left( x\right) ,\beta \left(
z\right) \right] \\
&=&\beta \left( x\right) \succ y\cdot \beta \left( z\right) -\alpha
^{-1}\beta ^{2}\left( z\right) \cdot \left( \alpha \left( x\right) \succ
\alpha ^{-1}\beta \left( x\right) \right) \\
&&-\left( \alpha ^{-1}\beta \left( y\right) \prec \alpha \left( x\right)
\right) \cdot \beta \left( z\right) +\alpha ^{-1}\beta ^{2}\left( z\right)
\cdot \left( y\prec \alpha ^{2}\beta ^{-1}\left( x\right) \right)
\end{eqnarray*}

and the equality hold applying the relations \eqref{BiHomtridend12} and \eqref{BiHomtridend13} from  Definition \ref{DefBiHomTriDend}.
Similar, we can proof the second assertion by using \eqref{BiHomtridend7}-\eqref{BiHomtridend10}.
\end{proof}

It is easy to see that  Eq. \eqref{subadj lie alg}, Eq. \eqref{tri to post 1} and Eq. \eqref{tri to post 2} fit into the commutative diagram
$$
\xymatrix{
\text{BiHom-tri-dendriform alg.} \ar[rr]^{x\prec y + x\succ y + x\cdot y}
\ar[dd]^{x\circ y =x\succ y - \a^{-1}\b(y)\prec \a\b^{-1}(x) }_{[x,y]=x\cdot y -\a^{-1}\b(y)\cdot \a\b^{-1}(x)} &&
\mbox{BiHom-associative alg.} \ar[dd]^{x\star y - \a^{-1}\b(y)\star \a\b^{-1}(x)} \\
&&\\
\mbox{BiHom-post-Lie alg.} \ar[rr]^{x\circ y-\a^{-1}\b(y)\circ \a\b^{-1}(x)+[x,y]} && \text{BiHom-Lie alg.}
}
$$
When the operation $\cdot$ of the BiHom-tri-dendriform algebra and the bracket $[\c,\c,]$ of the BiHom-post-Lie algebra are both trivial, we obtain the following commutative diagram (See \cite{Liu&Makhlouf&Menini&Panaite,bihomass,rotabaxter on bihom ass} for more details).
$$
\xymatrix{ \mbox{BiHom-dendriform alg.} \ar[rr]^{x\prec y + x\succ y} \ar[dd]^{x\succ y - \a^{-1}\b(y)\prec \a\b^{-1}(x)} && \mbox{BiHom-ssociative alg.} \ar[dd]^{x\star y - \a^{-1}\b(y)\star \a\b^{-1}(x)}\\
&& \\
\mbox{BiHom-pre-Lie alg.} \ar[rr]^{x\circ y - \a^{-1}\b(y)\circ \a\b^{-1}(x)} && \mbox{BiHom-Lie alg.}
}
$$

\begin{thm}\label{BiHomLieToBiHomPostLie}
  Let $(A, [\c,\c], \a, \b)$ be a BiHom-Lie algebra and $(V, [\c,\c]_V, \r, \f, \p)$  an $A$-module $\mathbb{K}$-algebra.
The linear map $T: V\rightarrow A$  is  an $\mathcal{O}$-operator of weight $\lambda\in \mathbb{K}$ associated to
$(V, [\c,\c]_V, \r, \f, \p)$. Define two new bilinear operations $\{\c,\c\}, \vartriangleright: V\times V\rightarrow V$ as follows
\begin{eqnarray*}
\{u,v\}=\lambda[u,v]_V,~~u\vartriangleright v=\r(T(u))v,
\end{eqnarray*}
for any $u,v\in V$. Then $(V, \{\c,\c\}, \vartriangleright, \f, \p)$ is a BiHom-post-Lie algebra and $T$ is a morphism of BiHom-Lie algebras
from the associated BiHom-Lie algebra  of $(V, \{\c,\c\}, \vartriangleright, \f, \p)$ to $(A, [\c,\c], \a, \b)$. Furthermore, $T(V)$ is a BiHom-Lie subalgebra of $(A, [\c,\c], \a, \b)$ and there is an induced BiHom-post-Lie algebra structure on $T(V)$ given by
\begin{equation}\label{InducedBiHomPostLie}
  [T(u),T(v)]_{T(V)}=T(\{u,v\})\ \ , T(u)\vartriangleright_{T(V)}T(v)=T(u\vartriangleright v),\ \ \forall\ u,v\in V.
\end{equation}
\end{thm}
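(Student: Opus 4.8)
The plan is to prove the four assertions in turn. The organising observation is that, with $u\rhd v:=\r(T(u))v$ and $\{u,v\}:=\l[u,v]_V$, the defining identity \eqref{O-operatorLamda} of the $\mathcal{O}$-operator becomes exactly
$$[T(u),T(v)]=T\big(u\rhd v-\f^{-1}\p(v)\rhd \f\p^{-1}(u)+\{u,v\}\big),$$
that is, $[T(u),T(v)]=T(\{u,v\}^{C})$, where $\{\c,\c\}^{C}$ is the sub-adjacent bracket \eqref{liebracket} (with $\a,\b$ replaced by $\f,\p$) of the BiHom-post-Lie structure we are constructing. Hence, once the first assertion is established, the morphism statement will be essentially free.

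First I would verify that $(V,\{\c,\c\},\rhd,\f,\p)$ is a BiHom-post-Lie algebra. That $(V,\{\c,\c\},\f,\p)$ is a BiHom-Lie algebra is inherited from $(V,[\c,\c]_V,\f,\p)$ by rescaling: BiHom-skew-symmetry is linear in the bracket, while the BiHom-Jacobi identity \eqref{Bihom-jaco} is homogeneous of degree two in the bracket, so replacing $[\c,\c]_V$ by $\l[\c,\c]_V$ multiplies it by $\l^{2}$ and it still vanishes. The multiplicativity $\f(u\rhd v)=\f(u)\rhd\f(v)$ (and its $\p$-analogue) follows from $T\f=\a T$ and the equivariance \eqref{bihom-lie-rep-1}: indeed $\f(u)\rhd\f(v)=\r(\a T(u))\f(v)=\f(\r(T(u))v)=\f(u\rhd v)$, and symmetrically with \eqref{bihom-lie-rep-2}. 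Axiom \eqref{bihom-post-lie condition1} comes straight out of the module-algebra compatibility \eqref{repKalgebras}: substituting $x=T(u)$, using $T\f\p=\a\b T$, and multiplying by $\l$ turns \eqref{repKalgebras} into precisely $\f\p(u)\rhd\{v,w\}=\{\p(u)\rhd v,\p(w)\}+\{\p(v),\f(u)\rhd w\}$.

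The hard part will be axiom \eqref{bihom-post-lie condition2}, the only place where both the representation axiom and the $\mathcal{O}$-operator identity must be played against each other. I would expand the two associators $as_{\f,\p}(\p(u),\f(v),w)$ and $as_{\f,\p}(\p(v),\f(u),w)$ through $u\rhd v=\r(T(u))v$ and the equivariance relations. Their ``outer'' parts assemble into $\r(\a\b T(u))\r(\a T(v))w-\r(\a\b T(v))\r(\a T(u))w$, which by the representation axiom \eqref{bihom-lie-rep-3}, applied with $x=T(u)$ and $y=\a T(v)$ so that $[\b(x),y]=[T\p(u),T\f(v)]$, equals $\r([T\p(u),T\f(v)])\p(w)$. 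I would then rewrite $[T\p(u),T\f(v)]$ with the $\mathcal{O}$-operator identity \eqref{O-operatorLamda}, taking its arguments to be $\p(u)$ and $\f(v)$; applying $\r(-)\p(w)$, the two non-torsion summands produced are exactly $\r(T(\r(\b T(u))\f(v)))\p(w)$ and $\r(T(\r(\b T(v))\f(u)))\p(w)$, which cancel against the ``inner'' parts of the two associators. What survives is $\l\,\r(T[\p(u),\f(v)]_V)\p(w)=\{\p(u),\f(v)\}\rhd\p(w)$, the left-hand side of \eqref{bihom-post-lie condition2}. This cancellation is the crux of the whole proof.

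For the remaining assertions little is left. The morphism property is the reformulation of \eqref{O-operatorLamda} noted at the outset, together with $T\f=\a T$ and $T\p=\b T$. Since $[T(u),T(v)]=T(\{u,v\}^{C})\in T(V)$ and $\a T(u)=T\f(u),\ \b T(u)=T\p(u)\in T(V)$, the subspace $T(V)$ is closed under $[\c,\c]$, $\a$ and $\b$, hence is a BiHom-Lie subalgebra of $A$. Finally I would transport the BiHom-post-Lie structure along the surjection $T:V\to T(V)$ by the formulas \eqref{InducedBiHomPostLie}; because $T$ intertwines $(\{\c,\c\},\rhd)$ on $V$ with $([\c,\c]_{T(V)},\rhd_{T(V)})$ on $T(V)$ and is onto, each BiHom-post-Lie axiom on $T(V)$ is obtained by applying $T$ to the already verified axiom on $V$. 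The only delicate point here is the well-definedness of the induced operations (that they descend through $T$), which I would settle using the $\mathcal{O}$-operator relation, equivalently by checking that $\ker T$ behaves as the required ideal.
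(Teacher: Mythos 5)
Your core verification coincides with the paper's own proof: the paper checks exactly the two identities you isolate, deriving \eqref{bihom-post-lie condition1} for $(V,\{\c,\c\},\rhd,\f,\p)$ from the module-algebra condition \eqref{repKalgebras} via $T\f=\a T$, $T\p=\b T$, and deriving \eqref{bihom-post-lie condition2} by the very cancellation you describe — the outer associator terms are assembled by the representation axiom \eqref{bihom-lie-rep-3} into $\r([T(\p(u)),T(\f(v))])\p(w)$, this bracket is expanded by \eqref{O-operatorLamda}, the two non-torsion images cancel the inner associator terms, and the surviving torsion term is $\{\p(u),\f(v)\}\rhd\p(w)$. Your handling of the routine points (skew-symmetry and BiHom-Jacobi of $\l[\c,\c]_V$ by homogeneity in the bracket, multiplicativity of $\rhd$ from \eqref{bihom-lie-rep-1}--\eqref{bihom-lie-rep-2}, the morphism property as a restatement of \eqref{O-operatorLamda}, closure of $T(V)$ under the bracket and the structure maps) is sound and in fact more complete than the paper, whose written proof consists only of the two displayed computations and leaves all the remaining assertions unargued.

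The one genuine soft spot is your final sentence: for $\l\neq 0$ the $\mathcal{O}$-operator relation does \emph{not} settle well-definedness of the individual operations in \eqref{InducedBiHomPostLie}. Take $w\in\ker T$ (which is $\f$- and $\p$-stable since $T\f=\a T$, $T\p=\b T$ with the structure maps regular). Then \eqref{O-operatorLamda} gives
\begin{equation*}
0=[T(u),T(w)]=T\big(\r(T(u))w+\l[u,w]_V\big)=T\big(u\rhd w+\{u,w\}\big),
\end{equation*}
so only the \emph{sum} of the two operations is killed by $T$ in the second argument — consistent with $T$ being a morphism out of the sub-adjacent BiHom-Lie algebra, whose bracket \eqref{liebracket} is exactly that combination — while $T(u\rhd w)=-\l T([u,w]_V)$ need not vanish; likewise in the first argument $w\rhd v=\r(T(w))v=0$ automatically, but $T(\{w,v\})=\l T([w,v]_V)$ is not forced to be zero, and chasing it through \eqref{O-operatorLamda} again only returns the same relation circularly. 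Hence each operation descends separately only under an extra hypothesis such as $[V,\ker T]_V\subseteq\ker T$ (trivially satisfied when $T$ is injective, which is the setting of the corollary that follows the theorem). Your expectation is correct in the weight-zero case, where $\{\c,\c\}=0$ and the same computation does yield $T(\r(T(u))w)=0$; for weight $\l$ the claim needs the ideal condition, not merely the $\mathcal{O}$-operator identity. The paper is silent on this entire final assertion, so you lose nothing relative to it — but your proposed fix, as stated, would not close that gap.
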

\begin{proof}
We use the last condition of representation of BiHom-Lie algebras on $\mathbb{K}$-algebra.
\begin{align*}
&\phi\psi(a)\rhd\{b,c\}-{\psi(a)\rhd b,\psi(c)}-\{\psi(b),\phi(a)\rhd c\}\\
&=\phi\psi(a)\rhd(\lambda[b,c]_{V})-\{\rho(T(\psi(a)))b,\psi(c)\}-\{\psi(b),\rho(T(\phi(a)))c\}\\
&=\rho(T(\phi\psi(a)))(\lambda[b,c]_{V})-\lambda[\rho(T(\psi(a)))b,\psi(c)]_{V}-\lambda[\psi(b),\rho(T(\phi(a)))c]_{V}\\
&=\lambda(\rho(\alpha\beta(T(a)))[b,c]_{V}-[\rho(\beta(T(a)))b,\psi(c)]-[\psi(b),\rho(\alpha(T(a)))c]_{V}\\
&=0.
\end{align*}
Using the condition \eqref{bihom-lie-rep-3} of Definition \ref{defi:bihom-lie representation}, we check
\begin{align*}
&\{\psi(a),\phi(b)\}\rhd\psi(c)-\phi\psi(a)\rhd(\phi(b)c)\\
&-(\psi(a)\rhd\phi(b))\rhd\psi(c)+\phi\psi(b)\rhd(\phi(a)\rhd c)+(\psi(b)\rhd\phi(a))\rhd\psi(c)\\
&=\rho(T(\rho(T(a)\phi(b))T(c)-\rho(T(\rho(T(\phi^{-1}\psi\phi(b)))\phi(a))\psi(a)\\
&+\rho(T(\{\psi(a),\phi(b)\}))\psi(c)-\phi\psi(a)(b \rhd c)+\psi\phi(b)\rhd(\phi(a) \rhd c)\\
&=\rho([T(\psi(a)),T(\phi(b)))])\psi(c)-\rho(T(\phi\psi(a)))\rho(T(a))c+\rho(T(\psi\phi(b)))\rho(T(\phi(a)))c\\
&=\rho(\beta(T(a)),T(\phi(b))])\psi(c)-\rho(\alpha\beta(T(a)))\rho(T(\phi(b)))c+\rho(\beta(T(\phi(b))))\rho(\alpha(T(a)))c\\
&=\rho([\beta(T(a)),T(\phi(b))])-\rho(\alpha\beta(T(a)))\rho(T(\phi(b)))c+\rho(\beta(T(\phi(b))))\rho(\alpha(T(a)))c\\
&=0.
\end{align*}
\end{proof}
An obvious consequence of Theorem \ref{BiHomLieToBiHomPostLie} is the following construction of a BiHom-post-Lie algebra in terms of Rota-Baxter operator of weight $\lambda$ on a BiHom-Lie algebra.

\begin{cor}
   Let $(A, [\c,\c], \a, \b)$ be a BiHom-Lie algebra. Then there exists  a compatible BiHom-post-Lie algebra structure on $A$ if and only if there exists an invertible $\mathcal{O}$-operator of weight $\lambda\in \mathbb{K}$ on $A$.
\end{cor}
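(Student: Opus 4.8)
The plan is to obtain both implications from Theorem~\ref{BiHomLieToBiHomPostLie}, the point being that invertibility upgrades the morphism $T$ to an isomorphism and so permits transport of structure between $A$ and the module. Throughout, by a BiHom-post-Lie structure \emph{compatible} with $(A,[\c,\c],\a,\b)$ I mean a torsion $\{\c,\c\}$ and a connection $\rhd$ making $(A,\{\c,\c\},\rhd,\a,\b)$ a BiHom-post-Lie algebra whose sub-adjacent BiHom-Lie algebra, formed via \eqref{liebracket}, is exactly $(A,[\c,\c],\a,\b)$.

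For the implication from right to left, suppose $T:V\to A$ is an invertible $\mathcal{O}$-operator of weight $\lambda$ associated to a regular $A$-module $\mathbb{K}$-algebra $(V,\{\c,\c\}_V,\r,\f,\p)$. By Theorem~\ref{BiHomLieToBiHomPostLie}, the data $\{u,v\}=\lambda\{u,v\}_V$ and $u\rhd v=\r(T(u))v$ make $(V,\{\c,\c\},\rhd,\f,\p)$ a BiHom-post-Lie algebra, and $T$ is a morphism of BiHom-Lie algebras from its sub-adjacent BiHom-Lie algebra onto $(A,[\c,\c],\a,\b)$. Since $T$ is bijective and satisfies $T\f=\a T$, $T\p=\b T$, I would transport the two operations along $T$, setting
\begin{equation*}
x\rhd_A y=T\big(T^{-1}(x)\rhd T^{-1}(y)\big),\qquad \{x,y\}_A=T\big(\{T^{-1}(x),T^{-1}(y)\}\big),
\end{equation*}
for $x,y\in A$. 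Transport of a BiHom-post-Lie structure along a linear isomorphism intertwining the structure maps again yields a BiHom-post-Lie algebra, so $(A,\{\c,\c\}_A,\rhd_A,\a,\b)$ is one; and because $T$ is a BiHom-Lie isomorphism onto $(A,[\c,\c],\a,\b)$, the sub-adjacent bracket of the transported structure computed from \eqref{liebracket} is precisely $[\c,\c]$. Hence a compatible structure exists.

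For the converse, let $(A,\{\c,\c\},\rhd,\a,\b)$ be a compatible BiHom-post-Lie structure, so that
\begin{equation*}
[x,y]=x\rhd y-\a^{-1}\b(y)\rhd\a\b^{-1}(x)+\{x,y\},\qquad x,y\in A.
\end{equation*}
I would take $V=A$ with torsion $\{\c,\c\}$ and $\r=L_\rhd$. The Proposition asserting that $(A,L_\rhd,\a,\b)$ is a representation of $(A,\{\c,\c\},\a,\b)$ gives, in the compatible case, a representation of $(A,[\c,\c],\a,\b)$, while axiom \eqref{bihom-post-lie condition1} is verbatim the compatibility condition \eqref{repKalgebras} for this choice of $\r,\f=\a,\p=\b$; thus $(A,\{\c,\c\},L_\rhd,\a,\b)$ is a regular $A$-module $\mathbb{K}$-algebra. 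Finally $T=\mathrm{id}_A$ is an invertible $\mathcal{O}$-operator of weight $1$: substituting $T=\mathrm{id}$, $\r=L_\rhd$, $\f=\a$, $\p=\b$, $\lambda=1$ into \eqref{O-operatorLamda} returns exactly the displayed sub-adjacent identity, which holds by hypothesis.

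The only non-formal points are the two structure-identification steps. In the first direction, one must check that transport along the isomorphism $T$ preserves both the BiHom-post-Lie axioms and, through \eqref{liebracket}, the sub-adjacent bracket; here the morphism property of $T$ coming from Theorem~\ref{BiHomLieToBiHomPostLie} is exactly what forces the transported sub-adjacent to equal $[\c,\c]$. In the converse, the crux is recognizing that the $\mathcal{O}$-operator relation of weight $1$ for $\mathrm{id}$ is literally the identity expressing that the post-Lie torsion recombines, via \eqref{liebracket}, into $[\c,\c]$. I expect deriving \eqref{repKalgebras} from \eqref{bihom-post-lie condition1} to be the main thing requiring care, the remainder being bookkeeping with $\a,\b$ and their inverses.
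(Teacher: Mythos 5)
Your proof is correct and matches the paper's argument in both directions: for sufficiency you transport the BiHom-post-Lie structure of Theorem~\ref{BiHomLieToBiHomPostLie} along the invertible $T$, which is exactly the paper's induced structure \eqref{InducedBiHomPostLie} on $T(V)=A$, with the same computation showing the $\mathcal{O}$-operator identity makes the sub-adjacent bracket \eqref{liebracket} collapse to $[\c,\c]$; and for necessity you take $T=\mathrm{id}$ as an invertible $\mathcal{O}$-operator of weight $1$, just as the paper does. Your necessity step is in fact slightly more careful than the paper's one-line version: you make the $A$-module $\mathbb{K}$-algebra explicit as the torsion together with $\rho=L_{\rhd}$ (justified by the proposition that $(A,L_{\rhd},\a,\b)$ represents the sub-adjacent algebra, plus the observation that \eqref{bihom-post-lie condition1} is verbatim \eqref{repKalgebras} for this choice), where the paper compresses this into the notation $(A,[\c,\c],ad,\a,\b)$.
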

\begin{proof}
Let $(A,[\c,\c],\rhd,\a,\b)$ be a BiHom-post-Lie algebra and $(A,[\c,\c],\a,\b)$ be the associated BiHom-Lie algebra.  Then the identity map $id: A \to A$ is an invertible $\mathcal{O}$-operator of weight $1$ on  $(A,[\c,\c],\a,\b)$  associated to $(A,[\c,\c],ad,\a,\b)$ .

Conversely, suppose that there exists an invertible $\mathcal{O}$-operator $T$ (of weight $\lambda$)  of $(A,[\c,\c],\a,\b)$  associated to  an $A$-module $\mathbb{K}$-algebra
 $(V, [\c,\c]_V, \r, \f, \p)$ .  Then, using Proposition \ref{BiHomLieToBiHomPostLie},  there is a BiHom-post-Lie algebra structure on $T(V)=A$ given by
\begin{equation*}
  \{T(u),T(v)\}=\lambda T([u,v]_V)\ \ , T(u)\vartriangleright T(v)=T(\rho(T(u))v),\ \ \forall\ u,v\in V.
\end{equation*}
If we st $x=T(u)$ and $y=T(v)$, then we get
\begin{equation*}
  \{x,y\}=\lambda T([T^{-1}(x),T^{-1}(y)]_V)\ \ , x\rhd y=T(\rho(x)T^{-1}(y)).
\end{equation*}
This is compatible BiHom-post-Lie algebra structure  on $(A,[\c,\c],\a,\b)$. Indeed,
\begin{align*}
& x\rhd y-\a^{-1}\b(y)\rhd \a\b^{-1}(x)+ \{x,y\} \\
&= T(\rho(x)T^{-1}(y)-\rho(\a^{-1}\b(y))T^{-1}\a\b^{-1}(x)+\lambda [T^{-1}(x),T^{-1}(y)]_V)\\
&=[TT^{-1}(x),TT^{-1}(y)]=[x,y].
\end{align*}
The proof is finished.

\end{proof}

\begin{cor}\label{PostLieByRotaBaxter}
  Let $(A, [\c,\c], \a, \b)$ be a BiHom-Lie algebra and the linear map $R: A\rightarrow A$  is  a  Rota-Baxter operator of weight $\lambda\in \mathbb{K}$. Then there exists a BiHom-post-Lie structure on $A$ given by
\begin{eqnarray*}
\{x,y\}=\lambda[x,y],~~x\vartriangleright y=[R(x),y],\ \forall \ x,y\in A.
\end{eqnarray*}
If in addition, $R$ is invertible, then there is a compatible BiHom-Post-Lie algebra structure on $A$ given by
\begin{equation*}
  [x,y]'=\lambda R([R^{-1}(x),R^{-1}(y)])\ \ , x\rhd y=R([x,R^{-1}(y)]),\ \forall x,y \in A.
\end{equation*}
\end{cor}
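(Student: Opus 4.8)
The plan is to obtain this corollary as a direct specialization of Theorem~\ref{BiHomLieToBiHomPostLie}. The key observation, already recorded in the Example stating that an $\mathcal{O}$-operator of weight $\lambda$ associated to $(A,[\c,\c],ad,\a,\b)$ is exactly a Rota--Baxter operator of weight $\lambda$, is that I may take $V=A$, $[\c,\c]_V=[\c,\c]$, $\rho=ad$, $\f=\a$, $\p=\b$ and $T=R$. Recall from the remark following the definition of an $A$-module $\mathbb{K}$-algebra that $(A,[\c,\c],ad,\a,\b)$ is indeed such a structure, so Theorem~\ref{BiHomLieToBiHomPostLie} applies once $R$ is recognized as the corresponding $\mathcal{O}$-operator.

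First I would verify this identification explicitly. Since $R$ commutes with $\a$ and $\b$ one has $R\a=\a R$ and $R\b=\b R$, so the first line of the $\mathcal{O}$-operator axioms is automatic. For the relation \eqref{O-operatorLamda} I would substitute $\rho=ad$ and $T=R$ to get $[R(u),R(v)]=R\big(ad(R(u))v-ad(R(\a^{-1}\b(v)))\a\b^{-1}(u)+\lambda[u,v]\big)$, then use $R\a^{-1}\b=\a^{-1}\b R$ together with the BiHom-skew-symmetry $[\b(a),\a(b)]=-[\b(b),\a(a)]$ to rewrite the middle term as $[u,R(v)]$. The relation then reads $[R(u),R(v)]=R([R(u),v]+[u,R(v)]+\lambda[u,v])$, which is precisely the weight-$\lambda$ Rota--Baxter identity. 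This skew-symmetry bookkeeping is the only computational point, and I expect it to be the main (albeit mild) obstacle; it is in fact exactly what the cited Example asserts, so it may simply be invoked.

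Granting the identification, Theorem~\ref{BiHomLieToBiHomPostLie} immediately produces a BiHom-post-Lie structure on $V=A$ with $\{u,v\}=\lambda[u,v]_V=\lambda[u,v]$ and $u\rhd v=\rho(T(u))v=ad(R(u))v=[R(u),v]$, which is the first assertion of the corollary.

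For the invertible case I would invoke the ``furthermore'' part of the same theorem. Since $R$ is bijective, $T(V)=R(A)=A$, and the induced BiHom-post-Lie structure \eqref{InducedBiHomPostLie} on $T(V)=A$ is $[R(u),R(v)]'=R(\{u,v\})=\lambda R([u,v])$ and $R(u)\rhd R(v)=R(u\rhd v)=R([R(u),v])$. Setting $x=R(u)$ and $y=R(v)$, equivalently $u=R^{-1}(x)$ and $v=R^{-1}(y)$, then yields $[x,y]'=\lambda R([R^{-1}(x),R^{-1}(y)])$ and $x\rhd y=R([x,R^{-1}(y)])$, as claimed, completing the proof.
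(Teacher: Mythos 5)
Your proposal is correct and is essentially the paper's own (largely implicit) argument: the paper presents this corollary as an immediate specialization of Theorem~\ref{BiHomLieToBiHomPostLie} to the adjoint $A$-module $\mathbb{K}$-algebra $(A,[\c,\c],\mathrm{ad},\a,\b)$ with $T=R$, exactly as you do, and your skew-symmetry bookkeeping showing that the weight-$\lambda$ $\mathcal{O}$-operator identity reduces to the Rota--Baxter identity is precisely the content of the paper's Example on that identification. The only step you leave implicit --- that in the invertible case the induced structure is \emph{compatible}, i.e.\ its sub-adjacent bracket recovers $[\c,\c]$ --- is the same one-line computation (using $R\a^{-1}\b=\a^{-1}\b R$ and BiHom-skew-symmetry, then the Rota--Baxter identity) already carried out in the paper's preceding corollary on invertible $\mathcal{O}$-operators, so no new idea is required.
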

\begin{ex}
  Let $L=sl(2,\K)$  whose standard basis consists of
$$X=\left(
      \begin{array}{cc}
        0 & 1 \\
        0 & 0 \\
      \end{array}
    \right),
Y=\left(
    \begin{array}{cc}
      0 & 0 \\
      1 & 0 \\
    \end{array}
  \right),
  H=\left(
      \begin{array}{cc}
        1 & 0 \\
        0 & -1 \\
      \end{array}
    \right).
$$
Then $[H,X]=2X,\quad [H,Y]=-2Y,\quad [X,Y]=H.$  Define two linear maps
$\alpha, \beta: L\rightarrow L$  by
$$\alpha(X)=\lambda^2 X, \quad \alpha(Y)=\frac{1}{\lambda^2}Y,\quad \alpha(H)=H,$$
$$\beta(X)=\gamma^2X, \quad \beta(Y)=\frac{1}{\gamma^2}Y,\quad \beta(H)=H,$$
where $\lambda,\gamma$ are parameters in $\K$.
Obviously we check that $\alpha$ and $\beta$ are two morphisms of the Lie algebra $(L,[\c,\c])$.
 Consider  the
 linear map $ [\c,\c]_{\a,\b}\colon L\otimes L\rightarrow L$
 \begin{gather*}
 [a,b]_{\a,\b} = [ \alpha (a) ,\beta (b) ] ,\qquad \text{for all} \ \ a,b\in L
\end{gather*} defined in the basis $X,\ Y, \ H$ by
$$[H,X]_{\a,\b}=2\gamma^2 X,\ \  [H,Y]_{\a,\b} =-\frac{2}{\gamma^2}Y,\ \  [X,Y]_{\a,\b}=\frac{\lambda^2}{\gamma^2}H. $$
Then $L_{( \alpha ,\beta ) }:=(L, [\c,\c]_{\a,\b}, \alpha ,
\beta )$ is a BiHom-Lie algebra.

Now, define the linear map $R:L\to L$ by
\begin{equation}\label{RotaBaxterSl2}
  R(X)=0,\ \ R(Y)=4Y\ \ \textrm{and}\ \ R(H)=2H.
\end{equation}
Then $R$ is a Rota-Baxter operator of weight $-4$ on $L_{( \alpha ,\beta ) }$. Using Corollary \ref{PostLieByRotaBaxter}, we can construct a BiHom-Post-Lie algebra on $L_{( \alpha ,\beta ) }$ given by
$$\{H,X\}=-8\gamma^2 X,\ \ \{H,Y\} =\frac{8}{\gamma^2}Y,\ \  \{X,Y\}=-4\frac{\lambda^2}{\gamma^2}H $$
and
$$X\rhd Y=X\rhd H=0,\ \ Y\rhd X=-4\frac{\gamma^2}{\lambda^2}H,\ \ Y\rhd H=-\frac{8}{\lambda^2}Y,\ \ H\rhd X=4\gamma^2X,\ \ H\rhd Y=-\frac{4}{\gamma^2}Y.$$
\end{ex}
\begin{ex}
  Let $(A, [\c,\c], \a, \b)$ be a BiHom-Lie algebra such that $A=A_1\oplus A_2$, where $A_1$ and $ A_2$  are two BiHom-Lie subalgebras, and the linear map $R: A\rightarrow A$  given by
$$R(x_1+x_2)=-\lambda x_2, \forall\ x_1\in A_1, \ x_2\in A_2.$$
It is easy to check that  $R$ is a   Rota-Baxter operator of weight $\lambda\in \mathbb{K}$ on $A$. Then $(A,\vartriangleright, \{\c,\c\}, \a, \b)$ is a BiHom-post-Lie algebra, where
\begin{eqnarray*}
\{x,y\}=\lambda[x,y],~~x\vartriangleright y=[R(x),y],\ \forall \ x,y\in A.
\end{eqnarray*}
\end{ex}
\section{Representation theory}
In this section, we introduce the notion of representations of a BiHom-post-Lie algebra $(A,[\c,\c],\rhd,\a,\b)$ on a vector space $V$.
 \begin{df}\label{rep bihom postlie}
 A representation of a   BiHom-post-Lie algebra $(A,[\c,\c],\rhd,\a,\b)$ on a vector space $V$ is a tuple $(V,\rho,\mu,\nu,\f,\p)$, such that $(V,\rho,\f,\p)$ is a representation of
$(A,[\c,\c],\a,\b)$  and $\mu,\nu: A \to gl(V)$ are linear maps satisfying

\begin{align}
&\mu(\alpha(x)) \f=\f \mu(x),\ \  \mu(\b(x)) \p=\p \mu(x),   \\
&\nu(\alpha(x) \f=\f \nu(x),\ \  \nu(\b(x)) \p=\p \nu(x), \\
& \nu([x,y])  \f \p=\rho(\b(x)) \nu(y)  \f-\rho(\b(y)) \nu(x)  \p , \label{rep bihom pos lie 1}\\
& \rho(\b(x)\rhd y) \p=\mu(\a\b(x)) \rho(y)-\rho(\b(y))  \mu(\a(x))  , \label{rep bihom pos lie 2}\\
&\mu([\b(x),\a(y)])  \p=\mu(\a\b(x))\mu(\a(y))-\mu(\b(x)\rhd \a(y))  \p \nonumber\\
& \hspace{0.5 cm}-\mu(\a\b(y))\mu(\a(x))+\mu(\b(y)\rhd \a(x))  \p   , \label{rep bihom pos lie 3}\\
& \nu(\b(y))\rho(\b(x))  \f=\mu(\a\b(x))\nu(y) \f-\nu(\b(y))\mu(\b(x))\f \nonumber\\
& \hspace{0.5 cm}-\nu(\a(x)\rhd y)\f\p +\nu(\b(y))\nu(\a(x)) \p ,  \label{rep bihom pos lie 4}
\end{align}
for any $x,y \in A$.
 \end{df}

Let  $(A,[\c,\c],\rhd,\a,\b)$ be a BiHom-post-Lie algebra and  $(V,\rho,\mu,\nu,\f,\p)$  a representation of   $(A,[\c,\c],\rhd,\a,\b)$.  By identity \eqref{rep bihom pos lie 3}, we deduce that $(V,\mu,\f,\p)$ is a representation of the sub-adjacent BiHom-Lie algebra $(A,\{\c,\c\},\a,\b)$ of   $(A,[\c,\c],\rhd,\a,\b)$.  In addition, it is obvious that $(A,ad,L_{\rhd},R_{\rhd},\a,\b)$ is a representation of  $(A,[\c,\c],\rhd,\a,\b)$  which is called the adjoint representation.

\begin{thm}\label{semi-directproduct post-lie alg}
A tuple  $(V,\rho,\mu,\nu,\f,\p)$ is a representation of a BiHom-post-Lie algebra   \\ $(A,[\c,\c],\rhd,\a,\b)$ if and only if $(A\oplus V,[\c,\c]_{\rho},\rhd_{\mu,\nu},\a+\f,\b+\p)$ is a BiHom-post-Lie algebra, where for any $x,y \in A$ and $u,v \in V$
\begin{align*}
&(\alpha+\f)(x+u)=\alpha(x)+\f(u),\\
&(\b+\p)(x+u)=\b(x)+\p(u), \\
&[x+u,y+v]_{\rho}=[x,y]+\rho(x)v-\rho(\a^{-1}\b(y))\f\p^{-1}(u),\\
&(x+u)\rhd_{\mu,\nu}(y+v)=x\rhd y+\mu(x)v+\nu(y)u.
\end{align*}
\end{thm}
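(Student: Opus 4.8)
The plan is to check the defining axioms of a BiHom-post-Lie algebra for the tuple $(A\oplus V,[\c,\c]_{\rho},\rhd_{\mu,\nu},\a+\f,\b+\p)$ one at a time, and to recognise each representation identity of Definition \ref{rep bihom postlie} as the part of the corresponding axiom that is linear in the $V$-slots. Two structural facts drive the computation. First, every operation above is multilinear, and in this construction a product of two elements of $V$ always vanishes (for instance $(0+u)\rhd_{\mu,\nu}(0+v)=\mu(0)v+\nu(0)u=0$ and $[0+u,0+v]_{\rho}=0$); hence when an axiom is evaluated on triples $x+u,\,y+v,\,z+w$, every monomial containing two or three $V$-entries drops out. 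What survives is the pure-$A$ monomial, which reproduces the matching axiom of $(A,[\c,\c],\rhd,\a,\b)$ and holds by hypothesis, together with the monomials linear in exactly one of $u,v,w$. Second, the BiHom-Lie layer is already handled: by Lemma \ref{lem:semidirectp}, $(A\oplus V,[\c,\c]_{\rho},\a+\f,\b+\p)$ is a BiHom-Lie algebra precisely when $(V,\rho,\f,\p)$ is a representation of $(A,[\c,\c],\a,\b)$, which is part of Definition \ref{rep bihom postlie}. Throughout I assume, as in the adjacent results, that $\a,\b,\f,\p$ are bijective, so that $\a^{-1}\b$ and $\f\p^{-1}$ are meaningful and $\a+\f,\ \b+\p$ are invertible on $A\oplus V$.

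First I would dispatch the multiplicativity of $\a+\f$ and $\b+\p$ with respect to $\rhd_{\mu,\nu}$. Expanding $(\a+\f)\big((x+u)\rhd_{\mu,\nu}(y+v)\big)$ against $\big((\a+\f)(x+u)\big)\rhd_{\mu,\nu}\big((\a+\f)(y+v)\big)$ and comparing components, the $A$-part is $\a(x\rhd y)=\a(x)\rhd\a(y)$ (true in $A$), while the $u$- and $v$-linear parts give $\f\nu(y)=\nu(\a(y))\f$ and $\f\mu(x)=\mu(\a(x))\f$; the same computation for $\b+\p$ yields $\p\nu(y)=\nu(\b(y))\p$ and $\p\mu(x)=\mu(\b(x))\p$. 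These are exactly the first two lines of Definition \ref{rep bihom postlie}.

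Next I would substitute $x+u,\,y+v,\,z+w$ into the first BiHom-post-Lie relation \eqref{bihom-post-lie condition1}. The pure-$A$ monomial is \eqref{bihom-post-lie condition1} for $A$. Reading off the monomial linear in the slot that acts through $\rhd$ (the $u$-entry) produces the identity tying $\nu([\c,\c])\f\p$ to a combination of $\rho$ and $\nu$, i.e.\ \eqref{rep bihom pos lie 1}, while the monomials linear in the two bracket slots ($v$ and $w$) produce the identity tying $\rho(\b(x)\rhd y)\p$ to $\mu(\a\b(x))\rho(y)-\rho(\b(y))\mu(\a(x))$, i.e.\ \eqref{rep bihom pos lie 2}; in each case one uses the multiplicativity relations from the previous step and the BiHom-Lie representation identities \eqref{bihom-lie-rep-1}--\eqref{bihom-lie-rep-3} to move the twisting maps $\a^{-1}\b,\ \f\p^{-1}$ into the stated position. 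I would then substitute the same triple into the second relation \eqref{bihom-post-lie condition2}; its pure-$A$ monomial is \eqref{bihom-post-lie condition2} for $A$, and its $V$-linear monomials reduce to \eqref{rep bihom pos lie 3} and \eqref{rep bihom pos lie 4}. Since all of these are genuine equivalences, each equality being readable in both directions, the two implications of the theorem follow simultaneously: the four representation identities hold if and only if all the BiHom-post-Lie axioms on $A\oplus V$ hold.

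The step I expect to be the main obstacle is the expansion of the associator $as_{\a,\b}$ in \eqref{bihom-post-lie condition2}. Each nested product $\rhd_{\mu,\nu}$ contributes both a $\mu$- and a $\nu$-term, and after the outer application of $\a+\f$, $\b+\p$ and of the twisted bracket $[\c,\c]_{\rho}$ (which itself carries a $\rho$ and the factor $\f\p^{-1}$), the number of $V$-linear monomials becomes large. The delicate point is to sort them by which original entry $u,v,w$ they contain and to keep every occurrence of $\a^{-1}\b$ and $\f\p^{-1}$ correctly positioned, so that the $u$-linear part reduces cleanly to \eqref{rep bihom pos lie 4} and the complementary $v,w$-linear part to \eqref{rep bihom pos lie 3}. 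All the other axioms reduce by comparatively short bookkeeping, so the bulk of the care goes into this one expansion.
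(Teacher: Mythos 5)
Your proposal is correct and takes essentially the same route as the paper: the paper also proves the theorem by substituting generic elements $x+a,\,y+b,\,z+c$ into the two BiHom-post-Lie axioms of $A\oplus V$, expanding, and matching components against the representation identities, displaying the computation for \eqref{bihom-post-lie condition2} via \eqref{rep bihom pos lie 3}--\eqref{rep bihom pos lie 4} and treating \eqref{bihom-post-lie condition1} ``similarly''. Your extra bookkeeping (vanishing of monomials with two $V$-entries, invoking Lemma \ref{lem:semidirectp} for the BiHom-Lie layer, and the explicit regularity assumption needed for $\a^{-1}\b$ and $\f\p^{-1}$) only makes explicit what the paper leaves implicit.
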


\begin{proof}
  By the conditions \eqref{rep bihom pos lie 3} and \eqref{rep bihom pos lie 4} in Definition  \ref{rep bihom postlie} and the identity \eqref{bihom-post-lie condition2}, we have
\begin{align*}
&[(\beta+\psi)(x+a),(\alpha+\phi)(y+b)]_{\rho}\rhd_{\mu,\nu}(\beta+\psi)(z+c)\\
&-as_{\alpha+\phi,\beta+\psi}((\beta+\psi)(x+a),(\alpha+\phi)(y+b),z+c)\\
&+as_{\alpha+\phi,\beta+\psi}((\beta+\psi)(y+b),(\alpha+\phi)(x+a),z+c)\\
&=[\beta(x),\alpha(y)]\rhd\beta(z)+\mu([\beta(x),\alpha(y)])\psi(c)+\nu(\beta(z))(\rho(\beta(x))\phi(b)-\rho(\beta(y))\phi(a))\\
&-\alpha\beta(x)\rhd(\alpha(y)\rhd z)-\mu(\alpha\beta(x))\mu(\alpha(y))c-\mu(\alpha\beta(x))\nu(z)\phi(b)-\nu(\alpha(y)\rhd z)\phi\psi(a)\\
&+(\beta(x)\rhd\alpha(y))\rhd\beta(z)+\mu(\beta(x)\rhd\alpha(y))\psi(c)+\nu(\beta(z))\mu(\beta(x))\phi(b)+\nu(\beta(z))\nu(\alpha(y))\psi(a)\\
&+\alpha\beta(y)\rhd(\alpha(x)\rhd z)+\mu(\alpha\beta(y))\mu(\alpha(x))c+\mu(\alpha\beta(y))\nu(z)\phi(a)+\nu(\alpha(x)\rhd z)\phi\psi(b)\\
&-(\beta(y)\rhd\alpha(x))\rhd\beta(z)-\mu(\beta(y)\rhd\alpha(x))\psi(c)-\nu(\beta(z))\mu(\beta(y))\phi(a)-\nu(\beta(z))\nu(\alpha(x))\psi(b)\\
&=0.
\end{align*}
Similarly we can check the identity \eqref{bihom-post-lie condition1} for the algebra $A\bigoplus V$ using  the axioms \eqref{rep bihom pos lie 1}--\eqref{rep bihom pos lie 4} and \eqref{bihom-post-lie condition1}.
\end{proof}

Let  $(A,[\c,\c],\rhd)$ be a post-Lie algebra and $\a,\b$ be two commuting post-Lie algebra morphisms on $A$.
Consider a representation  $(V,\rho,\mu,\nu)$  $(A,[\c,\c],\rhd)$ and $\f,\p: V \to V$ be two commuting linear maps obeying two the following conditions
\begin{align*}
&  \rho(\alpha(x))\circ \f=\f\circ \rho(x),\ \ \rho(\b(x))\circ \p=\p \circ \rho(x), \\
&  \mu(\alpha(x))\circ \f=\f\circ \mu(x),\ \ \mu(\b(x))\circ \p=\p \circ \mu(x), \\
&  \nu(\alpha(x))\circ \f=\f\circ \nu(x),\ \ \nu(\b(x))\circ \p=\p \circ \nu(x).\\
\end{align*}
  Define the three  linear maps   $\widetilde{\rho}, \widetilde{\mu}, \widetilde{\nu}: A\longrightarrow gl(V)$ by \\$\widetilde{\rho}(x)(v)=\rho(\alpha(x))( \p(v))$,   $\widetilde{\mu}(x)(v)=\mu(\alpha(x))( \p(v))$ and
   $\widetilde{\nu}(x)(v)=\nu(\alpha(x))( \p(v))$.
  \begin{pro}\label{twistmodule}
The tuple$(V,\widetilde{\rho},\widetilde{\mu},\widetilde{\nu},\f,\p)$ is a representation of $(A,[\cdot,\cdot]_{\a,\b},\rhd_{\a,\b},\alpha,\beta)$.
  \end{pro}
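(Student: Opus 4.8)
The plan is to verify the two halves of Definition \ref{rep bihom postlie} for the tuple $(V,\widetilde{\r},\widetilde{\mu},\widetilde{\nu},\f,\p)$ against the twisted BiHom-post-Lie algebra $(A,[\c,\c]_{\a,\b},\rhd_{\a,\b},\a,\b)$ produced in Example \ref{TwistBiHomPostLie}. For the first half I note that $(V,\r)$ is a representation of the Lie algebra $(A,[\c,\c])$ and that $\f,\p$ intertwine with $\r$ through $\a,\b$ exactly as required; hence the twisting construction for BiHom-Lie representations recalled in Section~2 already gives that $(V,\widetilde{\r},\f,\p)$ is a representation of the BiHom-Lie algebra $(A,[\c,\c]_{\a,\b},\a,\b)$. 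It therefore remains to establish the four intertwining identities for $\widetilde\mu,\widetilde\nu$ and the four compatibility relations \eqref{rep bihom pos lie 1}--\eqref{rep bihom pos lie 4}.

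The intertwining identities are immediate: for instance $\widetilde\mu(\a(x))\f=\mu(\a^2(x))\p\f$ and $\f\widetilde\mu(x)=\f\mu(\a(x))\p$, and these agree because $\f,\p$ commute and $\mu(\a^2(x))\f=\f\mu(\a(x))$ by hypothesis; the three remaining ones follow the same pattern. For the main relations my strategy is uniform: expand each twisted operator via $\widetilde{\r}(z)=\r(\a(z))\p$ (and likewise for $\widetilde\mu,\widetilde\nu$), use that $\a,\b$ are post-Lie morphisms so that they pass through $\rhd$ and $[\c,\c]$, and then move every occurrence of $\f,\p$ to the outside using the six intertwining hypotheses, so that each BiHom identity collapses onto the corresponding untwisted axiom \eqref{rep pos lie 1}--\eqref{rep pos lie 4}.

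Concretely, for \eqref{rep bihom pos lie 2} one has $\b(x)\rhd_{\a,\b}y=\a\b(x)\rhd\b(y)$, so the left side becomes $\r\big(\a^2\b(x)\rhd\a\b(y)\big)\p^2$, while the right side reduces, after moving $\p$ past $\r$ and $\mu$, to $\big(\mu(\a^2\b(x))\r(\a\b(y))-\r(\a\b(y))\mu(\a^2\b(x))\big)\p^2$; these coincide by \eqref{rep pos lie 2} evaluated at $(\a^2\b(x),\a\b(y))$. Relation \eqref{rep bihom pos lie 3}, being expressed entirely through $\mu$, reduces to \eqref{rep pos lie 3} in the same clean way, mirroring how \eqref{bihom-lie-rep-3} was checked in Section~2. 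I expect the genuine obstacle to be the two mixed relations \eqref{rep bihom pos lie 1} and \eqref{rep bihom pos lie 4}, where $\r$, $\mu$ and $\nu$ occur together and where the trailing twists are asymmetric, some terms carrying $\f$ and others $\p$: here one must track the precise balance of $\a$ versus $\b$ and the powers of $\f,\p$ very carefully in order to make the two sides match, and this bookkeeping is where I would concentrate the verification.
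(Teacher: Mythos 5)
The parts you actually carry out are correct: the BiHom-Lie half does follow from the twisting lemma for BiHom-Lie representations in Section~2, the intertwining identities for $\widetilde{\mu},\widetilde{\nu}$ check out exactly as you say, your verification of \eqref{rep bihom pos lie 2} via \eqref{rep pos lie 2} at the pair $(\alpha^2\beta(x),\alpha\beta(y))$ with trailing $\psi^2$ is right, and \eqref{rep bihom pos lie 3} does collapse onto \eqref{rep pos lie 3} at $(\alpha^2\beta(x),\alpha^2\beta(y))$, again with a uniform trailing $\psi^2$, because in these two axioms every term carries the same twist. (For comparison, the paper's entire proof is the single word ``straightforward,'' so your unwinding already exceeds it in detail.)

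The genuine gap is that you defer \eqref{rep bihom pos lie 1} and \eqref{rep bihom pos lie 4}, and these are precisely the identities where your uniform strategy does not go through as bookkeeping. Unwinding the twisted instance of \eqref{rep bihom pos lie 1} with $\widetilde{\rho}(x)=\rho(\alpha(x))\psi$, $\widetilde{\nu}(x)=\nu(\alpha(x))\psi$ and pushing all $\psi$'s to the right gives the requirement
\begin{align*}
\nu([\alpha^2(x),\alpha\beta(y)])\,\phi\psi^2
=\rho(\alpha\beta(x))\,\nu(\alpha\beta(y))\,\phi\psi^2-\rho(\alpha\beta(y))\,\nu(\alpha\beta(x))\,\psi^3 ,
\end{align*}
and the only available use of \eqref{rep pos lie 1} on the left, at the pair $(\alpha^2(x),\alpha\beta(y))$, produces instead $\rho(\alpha^2(x))\nu(\alpha\beta(y))\phi\psi^2-\rho(\alpha\beta(y))\nu(\alpha^2(x))\phi\psi^2$. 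Subtracting, and using $\nu(\alpha\beta(x))\psi=\psi\nu(\alpha(x))$ together with $\nu(\alpha^2(x))\phi=\phi\nu(\alpha(x))$, leaves the residue
\begin{align*}
\bigl(\rho(\alpha\beta(x))-\rho(\alpha^2(x))\bigr)\nu(\alpha\beta(y))\,\phi\psi^2
-\rho(\alpha\beta(y))\,(\psi-\phi)\,\nu(\alpha(x))\,\psi^2 ,
\end{align*}
which none of the six intertwining hypotheses nor \eqref{rep pos lie 1}--\eqref{rep pos lie 4} annihilates when $\alpha\neq\beta$ or $\phi\neq\psi$ (it vanishes identically in the Hom case $\alpha=\beta$, $\phi=\psi$). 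The same $\alpha\leftrightarrow\beta$, $\phi\leftrightarrow\psi$ mismatch occurs in \eqref{rep bihom pos lie 4}: the twisting produces, e.g., $\mu(\alpha^2\beta(x))\nu(\alpha\beta(y))\phi\psi^2$ and $\nu(\alpha\beta(y))\nu(\alpha^2\beta(x))\psi^3$, while the reduction to \eqref{rep pos lie 4}, forced by the left-hand side $\nu(\alpha\beta(y))\rho(\alpha\beta^2(x))\phi\psi^2$, needs $\mu(\alpha\beta^2(x))\nu(\alpha\beta(y))\phi\psi^2$ and $\nu(\alpha\beta(y))\nu(\alpha\beta^2(x))\phi\psi^2$. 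So the two mixed relations are not merely ``bookkeeping to concentrate on'': as stated, the collapse fails formally, and closing the argument requires either additional compatibility hypotheses or a different (asymmetric) twist of $\nu$. Your proposal is therefore incomplete at exactly the point where the proposition could fail, and the paper's proof offers no resolution to compare against.
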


\begin{proof}
straightforward.
\end{proof}

Let   $(V,\rho,\mu,\nu,\f,\p)$ be a representation of a BiHom-post-Lie algebra   $(A,[\c,\c],\rhd,\a,\b)$. Define the linear map
$\pi:A  \to gl(V)$ by
\begin{align}
\pi(x):=\rho(x)+\mu(u)-\nu(\a\b^{-1}(x))\f^{-1}\p,\ \forall x \in A.
\end{align}
\begin{pro}
With the above notations, $(V,\pi,\f,\p)$ is  a representation of the sub-adjacent BiHom-Lie algebra $(A,\{\c,\c\},\a,\b)$ of   $(A,[\c,\c],\rhd,\a,\b)$.
\end{pro}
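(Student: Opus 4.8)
The plan is to verify directly the three defining relations of a BiHom-Lie representation from Definition~\ref{defi:bihom-lie representation}, applied to the bracket $\{\c,\c\}$ of \eqref{liebracket}: namely $\pi(\a(x))\f=\f\pi(x)$, $\pi(\b(x))\p=\p\pi(x)$, and the BiHom-invariance identity $\pi(\{\b(x),y\})\p=\pi(\a\b(x))\pi(y)-\pi(\b(y))\pi(\a(x))$. Throughout I work in the regular setting (so that $\a,\b,\f,\p$ are bijective), as is already implicit in the very definition of $\pi$ and of the sub-adjacent bracket.

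First I would dispatch the two compatibility relations with $\f$ and $\p$. Since $(V,\r,\f,\p)$ is a representation of $(A,[\c,\c],\a,\b)$ and $\mu,\nu$ obey the intertwining relations listed at the head of Definition~\ref{rep bihom postlie}, each of the three summands of $\pi$ commutes with $\f$ (respectively $\p$) in the appropriate twisted way. The only summand requiring care is $\nu(\a\b^{-1}(x))\f^{-1}\p$: here one uses that $\f,\p$ commute, so $\f^{-1}\p\f=\p$, together with $\a\b^{-1}\a=\a^{2}\b^{-1}$ and the relation $\nu(\a(x))\f=\f\nu(x)$, which reorganizes the twisted term and produces exactly $\f\pi(x)$. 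The same bookkeeping with $\b,\p$ gives $\pi(\b(x))\p=\p\pi(x)$.

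Rather than expand the third (and genuinely hard) identity term by term, I would derive it conceptually from the semidirect-product description already available. By Theorem~\ref{semi-directproduct post-lie alg}, the hypothesis that $(V,\r,\mu,\nu,\f,\p)$ is a representation is equivalent to $(A\oplus V,[\c,\c]_{\r},\rhd_{\mu,\nu},\a+\f,\b+\p)$ being a BiHom-post-Lie algebra; applying Proposition~\ref{subadj lie alg} to it endows $A\oplus V$ with a BiHom-Lie bracket $\{\c,\c\}$. The decisive step is to expand this sub-adjacent bracket using the explicit formulas for $\rhd_{\mu,\nu}$, $[\c,\c]_{\r}$ and the diagonal maps $(\a+\f)^{-1}(\b+\p)=\a^{-1}\b+\f^{-1}\p$ and $(\a+\f)(\b+\p)^{-1}=\a\b^{-1}+\f\p^{-1}$. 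I expect the $A$-component to collapse to $\{x,y\}$, while the $V$-component, after using $\a\b^{-1}\a^{-1}\b=\mathrm{id}$ and $\f^{-1}\p\f\p^{-1}=\mathrm{id}$, reorganizes precisely into $\pi(x)v-\pi(\a^{-1}\b(y))\f\p^{-1}u$. In other words $(A\oplus V,\{\c,\c\},\a+\f,\b+\p)$ is exactly the semidirect sum attached to the map $\pi$ as in Lemma~\ref{lem:semidirectp}. Since it is a BiHom-Lie algebra, the converse direction of Lemma~\ref{lem:semidirectp} forces $(V,\pi,\f,\p)$ to be a representation of the sub-adjacent BiHom-Lie algebra $A^{C}=(A,\{\c,\c\},\a,\b)$, which is the claim.

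The main obstacle is precisely this bracket identification: matching the $V$-component of the sub-adjacent bracket of $A\oplus V$ with the prescribed semidirect form for $\pi$. Concretely one must check that the three $u$-terms $\nu(y)u-\mu(\a^{-1}\b(y))\f\p^{-1}u-\r(\a^{-1}\b(y))\f\p^{-1}u$ assemble into $-\pi(\a^{-1}\b(y))\f\p^{-1}u$, which hinges on the cancellation $\nu(y)\f^{-1}\p\f\p^{-1}=\nu(y)$ coming from commutativity of $\f,\p$, and likewise that the $v$-terms $\mu(x)v+\r(x)v-\nu(\a\b^{-1}(x))\f^{-1}\p v$ collapse to $\pi(x)v$. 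A purely computational alternative would instead expand $\pi(\{\b(x),y\})\p$ and $\pi(\a\b(x))\pi(y)-\pi(\b(y))\pi(\a(x))$ and match the resulting terms using \eqref{rep bihom pos lie 1}--\eqref{rep bihom pos lie 4}, the relation \eqref{bihom-lie-rep-3} for $\r$, and the already-recorded fact (from \eqref{rep bihom pos lie 3}) that $\mu$ represents $A^{C}$; there the real difficulty is the bookkeeping of the many terms generated by the two triple products, which the semidirect-product argument avoids entirely.
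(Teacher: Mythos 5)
Your proposal is correct and takes essentially the same route as the paper: the paper likewise forms the semidirect-product BiHom-post-Lie algebra on $A\oplus V$ via Theorem~\ref{semi-directproduct post-lie alg}, computes its sub-adjacent bracket to be $[x+u,y+v]^C=\{x,y\}+\pi(x)v-\pi(\a^{-1}\b(y))\f\p^{-1}(u)$, and concludes through the semidirect-sum characterization of Lemma~\ref{lem:semidirectp}. The cancellations you single out as the key bookkeeping step (using $\a\b^{-1}\a^{-1}\b=\mathrm{id}$ and $\f^{-1}\p\f\p^{-1}=\mathrm{id}$ to collapse the $u$- and $v$-terms into $\pi$) are exactly the computation the paper performs.
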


\begin{proof}
By Theorem \ref{semi-directproduct post-lie alg}, we have the semi-direct product BiHom-post-Lie algebra $(A\oplus V,[\c,\c]_{\rho},\rhd_{\mu,\nu},\a+\f,\b+\p)$. Consider its sub-adjacent Lie  algebra structure $[\c,\c]^C$, we have
{\small
\begin{align*}
& [x+u,y+v]^C=(x+u)\rhd_{\mu,\nu} (y+v)- (\a^{-1}\b(y)+\f^{-1}\p(v)) \rhd_{\mu,\nu} (\a\b^{-1}(x)+\f\p^{-1}(u)) +[x+u,y+v]_{\rho} \\
&= x\rhd y+\mu(x)v+\nu(y)u-
\a^{-1}\b(y) \rhd \a\b^{-1}(x)-\mu(\a^{-1}\b(y))\f\p^{-1}(u)-\nu(\a\b^{-1}(x))\f^{-1}\p(v) \\
& + [x,y]+\rho(x)v-\rho(\a^{-1}\b(y))\f\p^{-1}(u) \\
&=\{x,y\} +\pi(x)v-\pi(\a^{-1}\b(y))\f\p^{-1}(u).
\end{align*}
}
Then $(V,\pi,\f,\p)$ is  a representation of the sub-adjacent BiHom-Lie algebra $(A,\{\c,\c\},\a,\b)$.

\end{proof}


\end{document}